\documentclass[12pt]{article}

\usepackage{amssymb}
\usepackage{amsthm}
\usepackage{fleqn}
\usepackage{graphicx}
\usepackage{epstopdf}
\usepackage{amsfonts}
\usepackage[numbers]{natbib}
\usepackage[colorlinks,citecolor=blue,urlcolor=blue]{hyperref}
\usepackage{amsmath}
\usepackage{geometry}
\usepackage{array}

\geometry{left=2.5cm,right=2.5cm,top=2.5cm,bottom=2.5cm}

\newtheorem{theorem}{Theorem}[section]
\newtheorem{lemma}[theorem]{Lemma}

\newtheorem{remark}[theorem]{Remark}
\newtheorem{definition}[theorem]{Definition}

\begin{document}

\title{Robust optimal stopping with regime switching
\thanks{This work was supported by the National Key R\&D Program
of China (2023YFA1009200) and the National Natural Science Foundation of China
(61961160732, 12471418, 12171086).}
}

\author{Siyu Lv\thanks{School of Mathematics, Southeast University,
Nanjing 211189, China (lvsiyu@seu.edu.cn).}
\and
Zhen Wu\thanks{School of Mathematics, Shandong University,
Jinan 250100, China (wuzhen@sdu.edu.cn).}
\and
Jie Xiong\thanks{Department of Mathematics
and SUSTech International Center for Mathematics,
Southern University of Science and Technology, Shenzhen 518055, China
(xiongj@sustech.edu.cn).}
\and
Xin Zhang\thanks{School of Mathematics, Southeast University,
Nanjing 211189, China (x.zhang.seu@gmail.com).}
}

%\date{}

\maketitle

\begin{abstract}
In this paper, we study an optimal stopping problem in the presence of \emph{model uncertainty}
and \emph{regime switching}. The \emph{max-min formulation} for robust control and the
\emph{dynamic programming approach} are adopted to establish a general theoretical framework
for such kind of problem. First, based on the dynamic programming principle, the value function
of the optimal stopping problem is characterized as the unique \emph{viscosity solution} to the
associated Hamilton-Jacobi-Bellman equation. Then, the so-called \emph{smooth-fit principle}
for optimal stopping problems is proved in the current context, and a verification theorem
consisting of a set of \emph{sufficient conditions} for robust optimality is established.
Moreover, when the Markov chain has a large state space and exhibits a \emph{two-time-scale}
structure, a singular perturbation approach is utilized to reduce the complexity involved and
obtain an \emph{asymptotically optimal} solution. Finally, an example of choosing the best time
to sell a stock is provided, in which numerical experiments are reported to illustrate the
implications of model uncertainty and regime switching.
\end{abstract}

\textbf{Keywords:} optimal stopping, model uncertainty, Markov chain, viscosity solution,
smooth fit principle, verification theorem, two-time-scale modeling

\section{Introduction}

Optimal stopping refers to the problem of finding the best choice among all possible
stopping times, based on averaged or expected results. To study optimal stopping problems,
the \emph{verification theorem approach} has been extensively employed because it provides
some sufficient conditions that are easy to verify and typically leads to certain ordinary
or partial differential equations (in the form of \emph{variational inequalities}) that
can be solved analytically or numerically; see Lions \cite{Lions1975,Lions1976},
Bensoussan and Lions \cite[Chapter 3]{BL1982}, Menaldi \cite{Menaldi1980stopping,Menaldi1982},
{\O}ksendal \cite[Chapter 10]{Oksendal2003}, and Pham \cite[Chapter 5]{Pham2009}.
Apart from the interest in its own right, optimal stopping enjoys many applications in various fields,
such as option pricing (Peskir and Shiryaev \cite[Chapter \uppercase\expandafter{\romannumeral7}]{PS2006}),
stock selling ({\O}ksendal \cite[Examples 10.2.2 and 10.4.2]{Oksendal2003}),
and quickest detection (see the survey \cite{Shiryaev2010} by Shiryaev
and the recent works \cite{EM2023MOR,EMP2024SICON,EP2022AAP} by Ernst, Mei, and Peskir).

The robust stochastic control is a framework that deals with decision-making when, besides
risk in standard stochastic control, there exists ambiguity (or, model uncertainty) as well.
Briefly speaking, risk means the randomness involved in \emph{a given model}, while ambiguity
implies the uncertainty among \emph{a set of models}. A popularly adopted formulation for robust
stochastic control is the so-called max-min paradigm, i.e., finding an optimal control under the
\emph{worst-case} scenario; see \cite{HS2001,Pun2018,CFT2013,TF2013}. On the other hand, motivated
by the need of more realistic models that better reflect random environment, the regime switching
diffusion has attracted increasing interest in recent years. Mathematically, the regime switching
diffusion is a \emph{two-component} process $(X_{t},\alpha_{t})$ in which $X_{t}$ evolves according to
a diffusion process whose coefficients depend on the regime of a finite-state Markov chain $\alpha_{t}$.
As a result, the regime switching diffusion can capture more directly the \emph{discrete events}
that are less frequent but more significant to longer-term system behavior, and has been found to be
very useful in modeling and simulation (see \cite{GZ2005,YaoZhangZhou2006,YinZhang2013}).

This paper considers an optimal stopping problem involving model uncertainty and regime switching.
Following the max-min formulation and based on the dynamic programming principle (DPP), the value
function is characterized to be the unique viscosity solution to the associated Hamilton-Jacobi-Bellman
(HJB) equation; see Section \ref{Section VS}. With the help of the viscosity solution theory,
we proceed to prove the smooth-fit property of the value function through the boundary of the
so-called \emph{continuation region}; see Section \ref{Section SFP}. Then, a verification theorem
as a sufficient criterion that can be used to construct a robust optimal stopping rule is established
in Section \ref{Section VT}. Quite often, the states of the Markov chain belong to several
groups so that the chain jumps rapidly \emph{within} each group and slowly \emph{between}
the groups (see Yin and Zhang \cite{YinZhang2013}). In this case, we employ a singular perturbation
approach based on a two-time-scale model to reduce the complexity involved; see Section \ref{Section TTS}.
Finally, we provide an example of a stock selling problem, in which numerical experiments are conducted
to demonstrate the theoretical results; see Section \ref{Section SS}.

The main novelty and contribution of this paper include three aspects:
(\romannumeral1) We present a comprehensive study for the optimal stopping problem with model
uncertainty and regime switching, including the viscosity solution characterization of HJB
equation, the smooth-fit principle, and the verification theorem. Compared with traditional
optimal stopping problems (e.g., \cite[Chapter 10]{Oksendal2003} and \cite[Chapter 5]{Pham2009}),
the proofs are more delicate with some significant technical difficulties to be overcome.
(\romannumeral2) When the Markov chain has a two-time-scale structure, we prove the convergence of
the original problem to a limit problem with \emph{much simpler} structure, and the solution of
the limit problem provides a \emph{close approximation} to that of the original problem.
(\romannumeral3) The notion of viscosity solution is fully utilized in this paper and shows its
good \emph{versatility} for optimal control problems. In the Definition \ref{VS Definition 1}
of viscosity solution, the ``\emph{global} maximum (resp., minimum)" is adopted. It may be replaced
by ``\emph{local} maximum (resp., minimum)" or even ``\emph{strict local} maximum (resp., minimum)".
These three terms are essentially \emph{equivalent} (see Pham \cite[Chapter 4]{Pham2009}) and will
be used in characterization of HJB equation, smooth-fit principle, and two-time-scale convergence,
respectively, to facilitate the corresponding proofs.

\section{Problem formulation}\label{Section PF}

Let $\mathbb{R}$ be the real space. Denote $C^{1}$ (resp., $C^{2}$) the space of (resp., twice)
continuously differentiable functions. Let $(\Omega,\mathcal{F},\mathbb{P})$ be a probability
space on which a 1-dimensional standard Brownian motion $B^{\mathbb{P}}_{t}$, $t\geq 0$, and
a Markov chain $\alpha_{t}$, $t\geq 0$, are defined. Assume that $B_{t}^{\mathbb{P}}$ and
$\alpha_{t}$ are independent. The Markov chain $\alpha_{t}$ takes values in a finite state space
$\mathcal{M}=\{1,\ldots,m\}$. Denote $Q=(\lambda_{ij})_{i,j\in\mathcal{M}}$ the generator
(i.e., the matrix of transition rates) of $\alpha_{t}$.
Let $\{\mathcal{F}_{t}\}_{t\geq 0}$ be the natural filtration of $B_{t}^{\mathbb{P}}$ and $\alpha_{t}$.

The \emph{reference model} (or, the original state equation) is given by the following
regime switching diffusion:
\begin{equation}\label{system P}
\left\{
\begin{aligned}
dX_{t}=&b(X_{t},\alpha_{t})dt+\sigma(X_{t},\alpha_{t})dB^{\mathbb{P}}_{t},\quad t\geq0,\\
X_{0}=&x\in \mathbb{R},\quad \alpha_{0}=i\in \mathcal{M},
\end{aligned}
\right.
\end{equation}
where $b,\sigma:\mathbb{R}\times\mathcal{M}\mapsto \mathbb{R}$ are two functions satisfying:

(A1) There exists a positive constant $L_{1}$ such that, for any $x_{1},x_{2}\in \mathbb{R}$
and $i\in \mathcal{M}$,
\begin{equation*}
%\left\{
\begin{aligned}
|b(x_{1},i)-b(x_{2},i)|+|\sigma(x_{1},i)-\sigma(x_{2},i)|\leq L_{1}|x_{1}-x_{2}|.
\end{aligned}
%\right.
\end{equation*}
Under Assumption (A1), the reference model (\ref{system P}) admits a unique strong solution
(see Yin and Zhang \cite{YinZhang2013}).

The \emph{alternative models} are induced by a set of probability measures
$\mathcal{P}=\{\mathbb{Q}: \mathbb{Q}\sim \mathbb{P}\}$ that are \emph{equivalent}
to $\mathbb{P}$. By Girsanov's transformation for regime switching diffusions
(see Yao et al. \cite[Lemma 1]{YaoZhangZhou2006}), for each $\mathbb{Q}\in \mathcal{P}$,
there exists an $\mathbb{R}$-valued $\mathcal{F}_{t}$-adapted process (i.e.,
the Radon-Nikodym derivative) $q_{t}$, $t\geq0$, such that
\begin{equation*}
%\left\{
\begin{aligned}
\frac{d\mathbb{Q}}{d\mathbb{P}}\bigg|_{\mathcal{F}_{t}}
=\text{exp}\bigg(\int_{0}^{t}q_{s}dB_{s}^{\mathbb{P}}
-\frac{1}{2}\int_{0}^{t}q_{s}^{2}ds\bigg)
\doteq\mathcal{E}_{t}^{q}.
\end{aligned}
%\right.
\end{equation*}
Further, if $\mathcal{E}_{t}^{q}$ satisfies
\begin{equation}\label{admissible q condition}
%\left\{
\begin{aligned}
\mathbb{E}^{\mathbb{P}}[\mathcal{E}_{t}^{q}]=1,\quad t\geq0,
\end{aligned}
%\right.
\end{equation}
then $\mathcal{E}_{t}^{q}$ is a $\mathbb{P}$-martingale, and under $\mathbb{Q}$, the process
defined by $B_{t}^{\mathbb{Q}}=B_{t}^{\mathbb{P}}-\int_{0}^{t}q_{s}ds$, $t\geq0$, is a standard
Brownian motion, $\alpha_{t}$ remains to be a Markov chain with the \emph{original} generator $Q$,
and $B_{t}^{\mathbb{Q}}$ and $\alpha_{t}$ are still independent. Note that the model uncertainty
between an alternative model and the reference model (i.e., the difference between $\mathbb{Q}$
and $\mathbb{P}$) is characterized by $q_{t}$, which can be regarded as a \emph{control variable}.
We then denote the set of all admissible controls as
$\mathcal{U}=\{q_{t}:q_{t}\text{ satisfies }(\ref{admissible q condition})\}$.
We also denote the set of all $\mathcal{F}_{t}$-stopping times as $\mathcal{S}$.

Given a new probability measure $\mathbb{Q}$ associated with a $q_{t}\in\mathcal{U}$,
the reference model (\ref{system P}) can be rewritten as
\begin{equation}\label{system Q}
\left\{
\begin{aligned}
dX_{t}=&[b(X_{t},\alpha_{t})+\sigma(X_{t},\alpha_{t})q_{t}]dt
+\sigma(X_{t},\alpha_{t})dB^{\mathbb{Q}}_{t},\\
X_{0}=&x\in \mathbb{R},\quad \alpha_{0}=i\in \mathcal{M}.
\end{aligned}
\right.
\end{equation}
In what follows, we will consider the alternative model (\ref{system Q}) instead of (\ref{system P}),
and the solution to (\ref{system Q}) is sometimes denoted by $X_{t}^{q}$ if emphasis on $q_{t}$ is needed.

For $(\tau,q_{t})\in\mathcal{S}\times\mathcal{U}$, the reward functional
for the optimal stopping problem is defined as
\begin{equation*}
%\left\{
\begin{aligned}
J(x,i;\tau,q_{t})=\mathbb{E}^{\mathbb{Q}}\bigg[\int_{0}^{\tau}e^{-rt}
\bigg(f(X_{t},\alpha_{t})+\Theta(q_{t})\bigg)dt
+e^{-r\tau}g(X_{\tau},\alpha_{\tau})\bigg],
\end{aligned}
%\right.
\end{equation*}
where $r>0$ is the discount factor, $f,g:\mathbb{R}\times\mathcal{M}\mapsto \mathbb{R}$
are two functions satisfying:

(A2) There exists a positive constant $L_{2}$ such that,
for any $x_{1},x_{2}\in \mathbb{R}$ and $i\in \mathcal{M}$,
\begin{equation*}
%\left\{
\begin{aligned}
|f(x_{1},i)-f(x_{2},i)|+|g(x_{1},i)-g(x_{2},i)|\leq L_{2}|x_{1}-x_{2}|,
\end{aligned}
%\right.
\end{equation*}
and $\Theta:\mathbb{R}\mapsto \mathbb{R}$ is a continuous function to measure the difference
between $\mathbb{Q}$ and $\mathbb{P}$, or, the \emph{ambiguity aversion} of an investor
in financial terminology. Then, the value function of the optimal stopping problem is defined as
\begin{equation}\label{value function}
%\left\{
\begin{aligned}
V(x,i)=&\sup_{\tau\in\mathcal{S}}\widetilde{J}(x,i;\tau)
\doteq\sup_{\tau\in\mathcal{S}}\inf_{q_{t}\in\mathcal{U}}J(x,i;\tau,q_{t}).
\end{aligned}
%\right.
\end{equation}
For further analysis, we also assume:

(A3) The value function $V(x,i)$ is Lipschitz continuous with respect to $x$.
\begin{remark}
Some sufficient conditions to make (A3) hold include that, for example,
the admissible controls are restricted to those $q_{t}$ that are uniformly
bounded to confine the scope of ambiguity degree, or the diffusion coefficient
$\sigma$ is independent of $x$, i.e., $\sigma(x,i)\equiv\sigma(i)$.
Moreover, Assumption (A3) also implies that $V(x,i)$ has at most linear growth
since $\mathcal{M}$ is a finite set.
\end{remark}

\section{Viscosity solution}\label{Section VS}

We first present the DPP for our optimal stopping problem, whose proof is omitted here;
one is referred to Pham \cite[Chapter 5]{Pham2009} for details.
\begin{theorem}\label{Theorem DPP}
Let Assumptions (A1)-(A3) hold. Then, for any stopping time $\nu$, we have
\begin{equation}\label{DPP}
%\left\{
\begin{aligned}
V(x,i)
=\sup_{\tau\in\mathcal{S}}\inf_{q_{t}\in\mathcal{U}}\mathbb{E}^{\mathbb{Q}}
\bigg[&\int_{0}^{\tau\wedge\nu}e^{-rt}\bigg(f(X_{t},\alpha_{t})+\Theta(q_{t})\bigg)dt\\
&+e^{-r\tau}g(X_{\tau},\alpha_{\tau})1_{\{\tau<\nu\}}
+e^{-r\nu}V(X_{\nu},\alpha_{\nu})1_{\{\tau\geq\nu\}}\bigg].
\end{aligned}
%\right.
\end{equation}
\end{theorem}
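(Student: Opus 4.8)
The plan is to denote the right-hand side of \eqref{DPP} by $W(x,i)$ and to establish the two inequalities $V\le W$ and $V\ge W$ separately. Both directions rest on three ingredients: the strong Markov (flow) property of the regime switching diffusion $(X_t,\alpha_t)$, which lets us rewrite the reward accumulated on $\{\tau\ge\nu\}$ after time $\nu$ as a continuation reward $J(X_\nu,\alpha_\nu;\cdot,\cdot)$ restarted from $(X_\nu,\alpha_\nu)$; a decoupling of the inner minimization over $q_t$ across the two time intervals $[0,\nu]$ and $[\nu,\tau]$, obtained by conditioning on $\mathcal F_\nu$ and exploiting that $q_t$ enters the dynamics only through the Girsanov drift while $\Theta(q_t)$ enters the running reward additively; and a measurable selection argument producing near-optimal stopping rules that depend measurably on the initial data $(X_\nu,\alpha_\nu)$.

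For the inequality $V\le W$, fix $\epsilon>0$ and pick $\tau_\epsilon\in\mathcal S$ that is $\epsilon$-optimal for $V$, i.e. $\inf_{q_t}J(x,i;\tau_\epsilon,q_t)\ge V(x,i)-\epsilon$. Splitting the integral in $J(x,i;\tau_\epsilon,q_t)$ at $\nu$ and conditioning on $\mathcal F_\nu$, the strong Markov property identifies the post-$\nu$ contribution on $\{\tau_\epsilon\ge\nu\}$ with $e^{-r\nu}J(X_\nu,\alpha_\nu;\widehat\tau_\epsilon,\widehat q_t)$, where $\widehat\tau_\epsilon$ and $\widehat q_t$ are the time-shifted stopping time and control. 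Minimizing the shifted control pointwise gives $\inf_{\widehat q_t}J(X_\nu,\alpha_\nu;\widehat\tau_\epsilon,\widehat q_t)=\widetilde J(X_\nu,\alpha_\nu;\widehat\tau_\epsilon)\le V(X_\nu,\alpha_\nu)$, and after taking the infimum over the control on $[0,\nu]$ this yields $\inf_{q_t}J(x,i;\tau_\epsilon,q_t)\le\inf_{q_t}\mathbb E^{\mathbb Q}[\cdots]$, where the bracket is exactly the integrand of \eqref{DPP} evaluated at $\tau_\epsilon$. Chaining the two estimates and sending $\epsilon\to0$ gives $V(x,i)\le W(x,i)$.

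For the reverse inequality, fix $\tau\in\mathcal S$ and $\epsilon>0$, and use a measurable selection to obtain a stopping rule $\tau'_\epsilon=\tau'_\epsilon(X_\nu,\alpha_\nu)$ with $\widetilde J(X_\nu,\alpha_\nu;\tau'_\epsilon)\ge V(X_\nu,\alpha_\nu)-\epsilon$. Pasting it onto $\tau$ at $\nu$ defines the admissible stopping time $\widetilde\tau=\tau\,1_{\{\tau<\nu\}}+(\nu+\tau'_\epsilon)\,1_{\{\tau\ge\nu\}}$. Running the same conditioning-and-decoupling computation in reverse shows $\inf_{q_t}J(x,i;\widetilde\tau,q_t)\ge\inf_{q_t}\mathbb E^{\mathbb Q}[\cdots]-\epsilon$, the bracket being again the integrand of \eqref{DPP} at $\tau$; since $V(x,i)\ge\inf_{q_t}J(x,i;\widetilde\tau,q_t)$ by definition, taking the supremum over $\tau$ and letting $\epsilon\to0$ gives $V(x,i)\ge W(x,i)$.

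The main obstacle is the decoupling step, namely the ``robust tower property'' $\inf_{q_t}\mathbb E^{\mathbb Q}[\,\cdot\,]=\inf_{q_t \text{ on }[0,\nu]}\mathbb E^{\mathbb Q}[\cdots+1_{\{\tau\ge\nu\}}e^{-r\nu}\inf_{\widehat q_t}J(X_\nu,\alpha_\nu;\cdot,\widehat q_t)]$ that interchanges the infimum over controls with the conditional expectation given $\mathcal F_\nu$. Making this rigorous requires that the admissible class $\mathcal U$ be stable under concatenation of controls at the stopping time $\nu$, and that the near-optimal controls and stopping rules be chosen measurably in $(X_\nu,\alpha_\nu)$; this is where a regular conditional probability together with a measurable selection theorem enters, and it is precisely the point at which the joint supremum-over-$\tau$ and infimum-over-$q_t$ structure makes the argument more delicate than in the classical single-optimizer optimal stopping setting.
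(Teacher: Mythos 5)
The paper gives no proof of this theorem at all --- it explicitly omits it, deferring to Yong--Zhou and Pham --- and your outline is precisely the standard DPP argument those references carry out: establish $V\le W$ and $V\ge W$ separately by splitting the reward at $\nu$, conditioning on $\mathcal{F}_{\nu}$, restarting via the strong Markov property, and pasting $\epsilon$-optimal selections (of post-$\nu$ controls in one direction, of post-$\nu$ stopping rules in the other). The plan is sound for this max--min setting, and you correctly isolate where the real work lies, namely stability of $\mathcal{U}$ under concatenation at $\nu$, measurable selection of the near-optimal objects as functions of $(X_{\nu},\alpha_{\nu})$, and the resulting interchange of the infimum over controls with the conditional expectation.
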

Based on the DPP, we have the following HJB equation in the form of variational inequality
(see \cite{Lions1975,Lions1976,Menaldi1980stopping,Menaldi1982}):
\begin{equation}\label{HJB}
%\left\{
\begin{aligned}
\min\bigg\{\sup_{q\in \mathbb{R}}\bigg\{rv(x,i)-\mathcal{L}^{q}v(x,i)-Qv(x,\cdot)(i)
-f(x,i)-\Theta(q)\bigg\},
v(x,i)-g(x,i)\bigg\}=0,
\end{aligned}
%\right.
\end{equation}
where $v(x,i):\mathbb{R}\times\mathcal{M}\mapsto \mathbb{R}$ is the solution,
$Qv(x,\cdot)(i)\doteq\sum_{j\neq i}\lambda_{ij}[v(x,j)-v(x,i)]\equiv\sum_{j=1}^{m}\lambda_{ij}v(x,j)$
is the \emph{infinitesimal operator} of the Markov chain, and
\begin{equation*}
%\left\{
\begin{aligned}
\mathcal{L}^{q}v(x,i)\doteq[b(x,i)+\sigma(x,i)q]v^{\prime}(x,i)
+\frac{1}{2}\sigma^{2}(x,i)v^{\prime\prime}(x,i).
\end{aligned}
%\right.
\end{equation*}
\begin{definition}\label{VS Definition 1}
A continuous function $v(x,i)$, $i\in\mathcal{M}$, is called a viscosity subsolution
(resp., supersolution) to (\ref{HJB}) if for any $i\in\mathcal{M}$, we have
\begin{equation*}
\begin{aligned}
\min\bigg\{\sup_{q\in \mathbb{R}}\bigg\{rv(\overline{x},i)
-\mathcal{L}^{q}\varphi(\overline{x},i)-Qv(\overline{x},\cdot)(i)
-f(\overline{x},i)-\Theta(q)\bigg\},
v(\overline{x},i)-g(\overline{x},i)\bigg\}\leq0
\end{aligned}
\end{equation*}
(resp., $\geq0$), whenever $\varphi(x)\in C^{2}$ and $v(x,i)-\varphi(x)$ has a global maximum
(resp., minimum) at $x=\overline{x}\in \mathbb{R}$. Further, $v(x,i)$ is called a viscosity
solution if it is both a viscosity subsolution and supersolution.
\end{definition}
Moreover, an \emph{equivalent} definition of viscosity solution via the so-called
\emph{superdifferential} $J^{2,+}\phi(x)$ and \emph{subdifferential} $J^{2,-}\phi(x)$
of a function $\phi$ at $x$ (see Crandall et al. \cite[Section 2]{CIL1992} for
precise definitions) is given below.
\begin{definition}\label{VS Definition 2}
A continuous function $v(x,i)$, $i\in\mathcal{M}$, is called a viscosity subsolution
(resp., supersolution) to (\ref{HJB}) if for any $(x,i)\in \mathbb{R}\times\mathcal{M}$
and any $(\eta,\Lambda)\in J^{2,+}v(x,i)$ (resp., $J^{2,-}v(x,i)$), we have
\begin{equation*}
\begin{aligned}
\min\bigg\{\sup_{q\in \mathbb{R}}\bigg\{rv(x,i)-[b(x,i)+\sigma(x,i)q]\eta
-\frac{1}{2}\sigma^{2}(x,i)\Lambda-Qv(x,\cdot)(i)-f(x,i)-\Theta(q)\bigg\},&\\
v(x,i)-g(x,i)\bigg\}\leq0&
\end{aligned}
\end{equation*}
(resp., $\geq0$). Further, $v(x,i)$ is called a viscosity solution
if it is both a viscosity subsolution and supersolution.
\end{definition}
\begin{remark}
The upper (resp., lower) semicontinuity is enough to define the viscosity subsolution
(resp., supersolution) in Definitions \ref{VS Definition 1} and \ref{VS Definition 2};
see Pham \cite[Chapter 4]{Pham2009}. Here, we use continuity just for simplicity
of presentation since the value function is continuous.
\end{remark}
\begin{theorem}\label{existence theorem}
Let Assumptions (A1)-(A3) hold. Then, the value function $V(x,i)$, $i\in\mathcal{M}$,
defined by (\ref{value function}) is a viscosity solution to the HJB equation (\ref{HJB}).
\end{theorem}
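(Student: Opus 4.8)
The plan is to establish that the value function $V(x,i)$ is simultaneously a viscosity subsolution and a viscosity supersolution of the HJB equation \eqref{HJB}, working from the dynamic programming principle in Theorem \ref{Theorem DPP}. I will use Definition \ref{VS Definition 1} with the test function $\varphi\in C^{2}$, and verify the two halves of the minimum separately in each case. The key tool throughout is the DPP \eqref{DPP} specialized to a short exit time, combined with It\^o's formula for regime switching diffusions applied to $e^{-rt}\varphi(X_t)$.

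Let me sketch the two directions. For the \textbf{subsolution} property: fix $i\in\mathcal{M}$, take $\varphi\in C^2$ such that $V(\cdot,i)-\varphi$ has a global maximum at $\overline{x}$, and suppose for contradiction that the min is strictly positive. Then both $V(\overline{x},i)-g(\overline{x},i)>0$ and $\sup_{q}\{rV-\mathcal{L}^q\varphi-QV-f-\Theta(q)\}>0$ hold; by continuity these persist on a small ball around $\overline{x}$ before the chain jumps. Choosing the constant stopping time $\tau=h$ (not yet stopping) in the outer sup and the exit time $\nu$ from the ball as the localization time, the second strict inequality says that for \emph{every} $q$ the integrand $rV-\mathcal{L}^q\varphi-QV-f-\Theta(q)\ge \delta>0$; applying It\^o to $e^{-rt}\varphi(X_t^q)$ and using $V\le\varphi+(V(\overline{x},i)-\varphi(\overline{x}))$ from the global-max property, I obtain a strict improvement contradicting the DPP inequality $V(\overline{x},i)\ge \inf_q \mathbb{E}^{\mathbb{Q}}[\cdots]$. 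The infimum over $q$ survives because the bound holds uniformly in $q$. For the \textbf{supersolution} property: take $\varphi$ with $V(\cdot,i)-\varphi$ having a global minimum at $\overline{x}$. Here $V(\overline{x},i)-g(\overline{x},i)\ge0$ is immediate since stopping at $\tau=0$ is admissible and gives $V\ge g$. For the first term, I argue by contradiction assuming $\sup_q\{rV-\mathcal{L}^q\varphi-QV-f-\Theta(q)\}<0$ at $\overline{x}$; this yields a single good control $q^*$ (or a near-optimizer) making the integrand negative near $\overline{x}$, and feeding the corresponding measure $\mathbb{Q}^*$ together with a small exit time into the DPP produces $V(\overline{x},i)<V(\overline{x},i)$, a contradiction.

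The main obstacle I anticipate is handling the $\sup_{q\in\mathbb{R}}$ over the \emph{unbounded} control set correctly in both directions. In the supersolution direction one must ensure a (near-)optimal $q^*$ exists and yields an \emph{admissible} control $q_t\in\mathcal{U}$ satisfying \eqref{admissible q condition}; here it is convenient to freeze $q^*$ on the small stochastic interval $[0,\nu\wedge h]$ and verify the Novikov-type condition so that $\mathcal{E}^{q^*}$ is a genuine martingale and Girsanov applies. In the subsolution direction the uniformity of the bound $\delta>0$ over all $q$ must be preserved when passing to $\inf_q$, which requires that the strict inequality $\sup_q\{\cdots\}>0$ be upgraded to a uniform-in-$q$ gap only after recognizing that the term $-\mathcal{L}^q\varphi-\Theta(q)$ is being \emph{minimized} by nature; I will exploit that $-\sigma(x,i)q\varphi'(x)-\Theta(q)$ appearing inside is what the sup selects, so the worst-case inner infimum and the outer localization interact cleanly.

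The remaining steps are routine: localizing so that $\alpha_t\equiv i$ up to time $\nu$ (possible since the chain has no jump on a short interval with probability controlled by $\lambda_{ij}$, contributing only $o(h)$ terms that are absorbed into the coupling term $QV(\overline{x},\cdot)(i)$ via the generator expansion), dividing by $h$, and sending $h\downarrow0$ so that $\frac{1}{h}\mathbb{E}[\int_0^{\nu\wedge h}\cdots]\to$ the pointwise integrand at $\overline{x}$. The continuity of $V$ from (A3) guarantees the boundary terms $e^{-r\nu}V(X_{\nu\wedge h},\alpha_{\nu\wedge h})$ converge appropriately, and the Lipschitz/linear-growth bounds from (A1)--(A3) justify all interchanges of limit and expectation by dominated convergence.
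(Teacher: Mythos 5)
Your supersolution half is essentially the paper's argument (stopping at $\tau=0$ gives $V\geq g$; the DPP's near-optimal control for the inner infimum is fed into It\^o's formula with the global-minimum test function), but your subsolution half contains a fatal error. You assert that the contradiction hypothesis $\sup_{q\in\mathbb{R}}\{rV-\mathcal{L}^{q}\varphi-QV-f-\Theta(q)\}>0$ "says that for \emph{every} $q$ the integrand is $\geq\delta>0$." A positive supremum only produces \emph{one} witness $\overline{q}$; it says nothing uniform in $q$. Indeed the uniform claim is badly false here: for the entropy case $\Theta(q)=q^{2}/2\theta$ the map $q\mapsto rV-\mathcal{L}^{q}\varphi-QV-f-\Theta(q)$ is a downward parabola, so its infimum over $q$ is $-\infty$. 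Worse, even granting the uniform bound, your contradiction cannot be reached: at a global maximum of $V-\varphi$ you have $V\leq\varphi+(V(\overline{x},i)-\varphi(\overline{x}))$, so It\^o's formula yields an \emph{upper} bound of the form $\mathbb{E}^{\mathbb{Q}}[\text{reward}]\leq V(\overline{x},i)-\mathbb{E}^{\mathbb{Q}}\big[\int_{0}^{\tau\wedge\nu}e^{-rt}H^{q_{t}}dt\big]$, and an upper bound on the reward can never violate the one-sided inequality $V(\overline{x},i)\geq\inf_{q}\mathbb{E}^{\mathbb{Q}}[\cdots]$ that you chose (with $\tau=h$) as your target. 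Under the contradiction hypothesis the truth lies on the \emph{other} side: both stopping ($V>g$) and continuing against the adversarial $\overline{q}$ are strictly suboptimal, so $\sup_{\tau}\inf_{q}\mathbb{E}^{\mathbb{Q}}[\cdots]$ falls strictly \emph{below} $V(\overline{x},i)$, contradicting the DPP \emph{equality}.

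Concretely, the paper's repair is: take the single constant $\overline{q}$ witnessing the positive sup, so that $rV-\mathcal{L}^{\overline{q}}\varphi-QV-f-\Theta(\overline{q})\geq\delta$ and $V-g\geq\delta$ hold on a ball before the chain jumps; then for an \emph{arbitrary} stopping time $\tau$, It\^o plus the global-max substitution gives $\mathbb{E}^{\overline{\mathbb{Q}}}[\text{reward}]\leq V(\overline{x},i)-\delta\,\mathbb{E}^{\overline{\mathbb{Q}}}\big[\int_{0}^{\tau\wedge\nu}e^{-rt}dt+e^{-r\tau}1_{\{\tau<\nu\}}\big]$; since the inner infimum is bounded above by the value at $\overline{q}$, taking the supremum over $\tau$ contradicts the DPP \emph{provided} the bracketed expectation is bounded below by some $c_{0}>0$ uniformly over all stopping times $\tau$. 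That uniformity over $\tau$ (not over $q$!) is the real technical crux: for $\tau\equiv0$ the time integral vanishes and one must exploit the term $e^{-r\tau}1_{\{\tau<\nu\}}$ coming from $V\geq g+\delta$; the paper settles it with the auxiliary barrier $\phi(x)=c_{1}(1-|x-\overline{x}|^{2}/\delta^{2})$. Your proposal fixes a single deterministic $\tau=h$ and never confronts this. One smaller point on your supersolution sketch: localizing at a spatial exit time is dangerous, because the adversary's near-optimal control could exit the ball arbitrarily fast, destroying any control-independent lower bound on the elapsed time; the paper avoids this by localizing only at $\tau_{\alpha}\wedge c$ (whose law is control-independent), bounding the integrand pointwise by $\sup_{q}\{\cdots\}(X_{t})$, and then letting $c\rightarrow0$ — which also disposes of your Novikov/admissibility worries.
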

\noindent{\it Proof.}
\emph{Viscosity supersolution property.}
For any fixed $i\in\mathcal{M}$, let $\varphi(x)\in C^{2}$ be such that $V(x,i)-\varphi(x)$
attains its global minimum at $x=\overline{x}$. On the one hand, taking $\tau=0$ in the
DPP (\ref{DPP}), we have
\begin{equation}\label{V>=g}
\begin{aligned}
V(x,i)\geq g(x,i),\quad i\in\mathcal{M}.
\end{aligned}
\end{equation}
On the other hand, based on $V$ and $\varphi$, we introduce a function
$\psi:\mathbb{R}\times\mathcal{M}\rightarrow \mathbb{R}$ defined as
\begin{equation}\label{psi}
\begin{aligned}
\psi(x,j)=\left\{
\begin{aligned}
&\varphi(x)+V(\overline{x},i)-\varphi(\overline{x}),\quad &&j=i,\\
&V(x,j),\quad &&j\neq i.
\end{aligned}
\right.
\end{aligned}
\end{equation}
Let $\nu=\tau_{\alpha}\wedge c$ for some $c>0$, where $\tau_{\alpha}$ is the first jump time
of $\alpha_{t}$. Taking an arbitrary $q_{t}\in\mathcal{U}$ and applying It\^{o}'s formula to
$e^{-rt}\psi(X_{t}^{q},\alpha_{t})$ between 0 and $\nu$, we obtain
\begin{equation*}
\begin{aligned}
&\mathbb{E}^{\mathbb{Q}}[e^{-r\nu}\psi(X_{\nu}^{q},\alpha_{\nu})]-\psi(\overline{x},i)\\
=&\mathbb{E}^{\mathbb{Q}}\bigg[\int_{0}^{\nu}e^{-rt}\bigg(-r\psi(X_{t}^{q},i)
+\mathcal{L}^{q}\varphi(X_{t}^{q})+Q\psi(X_{t}^{q},\cdot)(i)\bigg)dt\bigg].
\end{aligned}
\end{equation*}
Note that $V(x,i)-\varphi(x)$ attains its global minimum at $x=\overline{x}$,
so for $0\leq t<\nu$, it follows that $\psi(X_{t}^{q},i)=\varphi(X_{t}^{q})
+(V(\overline{x},i)-\varphi(\overline{x}))\leq V(X_{t}^{q},i)$.
Then we get
\begin{equation}\label{V>=}
\begin{aligned}
&\mathbb{E}^{\mathbb{Q}}[e^{-r\nu}V(X_{\nu}^{q},\alpha_{\nu})]-V(\overline{x},i)\\
\geq&\mathbb{E}^{\mathbb{Q}}\bigg[\int_{0}^{\nu}e^{-rt}\bigg(-rV(X_{t}^{q},i)
+\mathcal{L}^{q}\varphi(X_{t}^{q})+QV(X_{t}^{q},\cdot)(i)\bigg)dt\bigg].
\end{aligned}
\end{equation}
Choosing $\tau=\nu$ in the DPP (\ref{DPP}), we know that, for any $\varepsilon>0$,
there exists a $\overline{q}_{t}\in\mathcal{U}$ such that
\begin{equation}\label{DPP inf}
%\left\{
\begin{aligned}
V(\overline{x},i)+c\varepsilon
>\mathbb{E}^{\overline{\mathbb{Q}}}\bigg[\int_{0}^{\nu}e^{-rt}
\bigg(f(X_{t}^{\overline{q}},i)+\Theta(\overline{q}_{t})\bigg)dt
+e^{-r\nu}V(X_{\nu}^{\overline{q}},\alpha_{\nu})\bigg].
\end{aligned}
%\right.
\end{equation}
Replacing $q_{t}$ by $\overline{q}_{t}$ in (\ref{V>=}) (noting the fact that $\nu$
is independent of $q_{t}$) and combining (\ref{V>=}) and (\ref{DPP inf}), we have
\begin{equation*}
\begin{aligned}
\mathbb{E}^{\overline{\mathbb{Q}}}\bigg[\int_{0}^{\nu}e^{-rt}
\bigg(rV(X_{t}^{\overline{q}},i)-\mathcal{L}^{\overline{q}}\varphi(X_{t}^{\overline{q}},i)
-QV(X_{t}^{\overline{q}},\cdot)(i)-f(X_{t}^{\overline{q}},i)-\Theta(\overline{q}_{t})\bigg)dt\bigg]
>-c\varepsilon.
\end{aligned}
\end{equation*}
Dividing both sides of the above inequality by $c$, letting $c\rightarrow0$,
and taking the supremum over $q\in \mathbb{R}$, we have
$\sup_{q\in \mathbb{R}}\{rV(\overline{x},i)-\mathcal{L}^{q}\varphi(\overline{x},i)
-QV(\overline{x},\cdot)(i)-f(\overline{x},i)-\Theta(q)\}\geq0$,
which together with (\ref{V>=g}) yield the desired viscosity supersolution property.

\emph{Viscosity subsolution property.}
If it were not, there would exist an $\overline{x}$, an $i$, and a $\varphi(x)\in C^{2}$
such that $V(x,i)-\varphi(x)$ attains its global maximum at $\overline{x}$, but
$\sup_{q\in \mathbb{R}}\{rV(\overline{x},i)-\mathcal{L}^{q}\varphi(\overline{x},i)
-QV(\overline{x},\cdot)(i)-f(\overline{x},i)-\Theta(q)\}>0$
and $V(\overline{x},i)-g(\overline{x},i)>0$. Then, there should exist a constant
$\overline{q}\in \mathbb{R}$ and a $\delta>0$ such that
\begin{equation}\label{subsolution proof 1}
\begin{aligned}
rV(X_{t}^{\overline{q}},i)-\mathcal{L}^{\overline{q}}\varphi(X_{t}^{\overline{q}},i)
-QV(X_{t}^{\overline{q}},\cdot)(i)-f(X_{t}^{\overline{q}},i)-\Theta(\overline{q})\geq\delta
\end{aligned}
\end{equation}
and
\begin{equation}\label{subsolution proof 2}
\begin{aligned}
V(X_{t}^{\overline{q}},i)-g(X_{t}^{\overline{q}},i)\geq\delta
\end{aligned}
\end{equation}
before $\nu\doteq\tau_{\delta}\wedge\tau_{\alpha}$, where $\tau_{\delta}$
is the first exit time of $X^{\overline{q}}$ from the neighborhood
$B_{\delta}(\overline{x})\doteq\{x\in \mathbb{R}:|x-\overline{x}|<\delta\}$.

For any $\tau\in \mathcal{S}$ and the function $\psi$ defined by (\ref{psi}),
applying It\^{o}'s formula to $e^{-rt}\psi(X_{t}^{\overline{q}},\alpha_{t})$
between $0$ and $\tau\wedge\nu$, substituting $V$ for $\psi$ as in the proof
of viscosity supersolution property, and noting (\ref{subsolution proof 1})
and (\ref{subsolution proof 2}), we have
\begin{equation}\label{subsolution proof 5}
\begin{aligned}
V(\overline{x},i)\geq&\mathbb{E}^{\overline{\mathbb{Q}}}
\bigg[\int_{0}^{\tau\wedge\nu}e^{-rt}\bigg(f(X_{t}^{\overline{q}},i)+\Theta(\overline{q})\bigg)dt\\
&+e^{-r\tau}g(X_{\tau}^{\overline{q}},\alpha_{\tau})1_{\{\tau<\nu\}}
+e^{-r\nu}V(X_{\nu}^{\overline{q}},\alpha_{\nu})1_{\{\tau\geq\nu\}}\bigg]\\
&+\delta \mathbb{E}^{\overline{\mathbb{Q}}}\bigg[\int_{0}^{\tau\wedge\nu}e^{-rt}dt+e^{-r\tau}1_{\{\tau<\nu\}}\bigg],
\end{aligned}
\end{equation}
where we can show that the last term in (\ref{subsolution proof 5}) is not less than
$\delta c_{0}$ for some $c_{0}>0$. In fact, similar to Pham \cite[Chapter 5]{Pham2009},
we choose a smooth function
$$
\phi(x)=c_{1}(1-|x-\overline{x}|^{2}/\delta^{2})
$$
with
\begin{equation*}
\begin{aligned}
0<c_{1}=\min\bigg\{1,
\bigg(r+\frac{2}{\delta}\max_{i\in\mathcal{M}}\sup_{x\in B_{\delta}(\overline{x})}
|b(x,i)+\sigma(x,i)\overline{q}|
+\frac{1}{\delta^{2}}\max_{i\in\mathcal{M}}\sup_{x\in B_{\delta}(\overline{x})}
\sigma^{2}(x,i)\bigg)^{-1}\bigg\}.
\end{aligned}
\end{equation*}
It can be seen that $\phi(\cdot)$ has the following properties:
(a) $\phi(\overline{x})=c_{1}$,
(b) $\phi(x)\leq c_{1}\leq 1$, $\forall x\in B_{\delta}(\overline{x})$,
(c) $r\phi(x)-\mathcal{L}^{\overline{q}}\phi(x)\leq1$, $\forall x\in B_{\delta}(\overline{x})$,
(d) $\phi(x)=0$, $\forall x\in \partial B_{\delta}(\overline{x})$.

By applying It\^{o}'s formula to $e^{-rt}\phi(X^{\overline{q}}_{t})$
between $0$ and $\tau\wedge\nu$, we have
\begin{equation*}
\begin{aligned}
c_{1}=\phi(\overline{x})
=&\mathbb{E}^{\overline{\mathbb{Q}}}\bigg[\int_{0}^{\tau\wedge\nu}e^{-rt}
\bigg(r\phi(X^{\overline{q}}_{t})-\mathcal{L}^{\overline{q}}\phi(X^{\overline{q}}_{t})\bigg)dt
+e^{-r\tau\wedge\nu}\phi(X^{\overline{q}}_{\tau\wedge\nu})\bigg]\\
\leq&\mathbb{E}^{\overline{\mathbb{Q}}}\bigg[\int_{0}^{\tau\wedge\nu}e^{-rt}dt
+e^{-r\tau}\phi(X^{\overline{q}}_{\tau})1_{\{\tau<\nu\}}
+e^{-r\nu}\phi(X^{\overline{q}}_{\nu})1_{\{\tau\geq\nu\}}\bigg]\\
\leq&\mathbb{E}^{\overline{\mathbb{Q}}}\bigg[\int_{0}^{\tau\wedge\nu}e^{-rt}dt
+e^{-r\tau}1_{\{\tau<\nu\}}+e^{-r\nu}\phi(X^{\overline{q}}_{\nu})1_{\{\tau\geq\nu\}}\bigg].
\end{aligned}
\end{equation*}
Note that
\begin{equation*}
\begin{aligned}
\mathbb{E}^{\overline{\mathbb{Q}}}[e^{-r\nu}\phi(X^{\overline{q}}_{\nu})1_{\{\tau\geq\nu\}}]
=\mathbb{E}^{\overline{\mathbb{Q}}}[e^{-r\tau_{\delta}}\phi(X^{\overline{q}}_{\tau_{\delta}})
1_{\{\tau\geq\nu\}}1_{\{\tau_{\delta}<\tau_{\alpha}\}}]
+\mathbb{E}^{\overline{\mathbb{Q}}}[e^{-r\tau_{\alpha}}\phi(X^{\overline{q}}_{\tau_{\alpha}})
1_{\{\tau\geq\nu\}}1_{\{\tau_{\delta}\geq\tau_{\alpha}\}}],
\end{aligned}
\end{equation*}
where the first term of the right-hand side is equal to 0
since $X^{\overline{q}}_{\tau_{\delta}}\in \partial B_{\delta}(\overline{x})$
and the second term
$
\mathbb{E}^{\overline{\mathbb{Q}}}[e^{-r\tau_{\alpha}}\phi(X^{\overline{q}}_{\tau_{\alpha}})
1_{\{\tau\geq\nu\}}1_{\{\tau_{\delta}\geq\tau_{\alpha}\}}]\doteq c_{2}<c_{1},
$
due to $\tau_{\alpha}>0$, a.s. Therefore,
\begin{equation*}
\begin{aligned}
\mathbb{E}^{\overline{\mathbb{Q}}}\bigg[\int_{0}^{\tau\wedge\nu}e^{-rt}dt
+e^{-r\tau}1_{\{\tau<\nu\}}\bigg]
\geq c_{1}-c_{2}\doteq c_{0}>0.
\end{aligned}
\end{equation*}
Going back to (\ref{subsolution proof 5}), it follows from the arbitrariness of $\tau$ that
\begin{equation*}
\begin{aligned}
V(\overline{x},i)
\geq&\sup_{\tau\in\mathcal{S}}\inf_{q_{t}\in\mathcal{U}}
\mathbb{E}^{\mathbb{Q}}\bigg[\int_{0}^{\tau\wedge\nu}e^{-rt}\bigg(f(X_{t}^{q},i)+\Theta(q_{t})\bigg)dt\\
&+e^{-r\tau}g(X_{\tau}^{q},\alpha_{\tau})1_{\{\tau<\nu\}}
+e^{-r\nu}V(X_{\nu}^{q},\alpha_{\nu})1_{\{\tau\geq\nu\}}\bigg]+\delta c_{0},
\end{aligned}
\end{equation*}
which leads to a contradiction to the DPP (\ref{DPP}).
~\hfill $\Box$

In the following uniqueness theorem, the function $\Theta(q)$ is specified to be
the \emph{relative entropy} form $\Theta(q)=q^{2}/2\theta$, which is commonly used
in the robust control literature (see \cite{HS2001,Pun2018}), and the positive
parameter $\theta$ is the so-called \emph{ambiguity factor}. In this case, we can solve
$q^{*}(x,i)\doteq\arg\sup_{q\in \mathbb{R}}H^{q}v(x,i)=-\theta\sigma(x,i)v^{\prime}(x,i)$,
and the HJB equation (\ref{HJB}) reduces to
\begin{equation}\label{uniqueness HJB}
%\left\{
\begin{aligned}
\min\bigg\{rv-bv^{\prime}+\frac{\theta}{2}\sigma^{2}[v^{\prime}]^{2}
-\frac{1}{2}\sigma^{2}v^{\prime\prime}-Qv(x,\cdot)(i)-f,
v-g\bigg\}=0,
\end{aligned}
%\right.
\end{equation}
where the arguments $(x,i)$ are dropped for simplicity.
\begin{theorem}\label{uniqueness theorem}
Let Assumptions (A1)-(A3) hold. Let $V_{1}(x,i)$ and $V_{2}(x,i)$ be two viscosity solutions
to the HJB equation (\ref{uniqueness HJB}) and both have at most linear growth. Then, we have
$V_{1}(x,i)=V_{2}(x,i)$ for any $(x,i)\in \mathbb{R}\times\mathcal{M}$.
\end{theorem}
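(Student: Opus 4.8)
The plan is to establish a \emph{comparison principle}: every viscosity subsolution of linear growth lies below every viscosity supersolution of linear growth. Uniqueness then follows at once by applying this twice, since $V_1$ and $V_2$ are each simultaneously a sub- and a supersolution. So I fix a subsolution $u$ and a supersolution $w$, both of at most linear growth, assume for contradiction that $M\doteq\sup_{(x,i)}(u(x,i)-w(x,i))>0$, and seek a contradiction. The natural framework is Definition \ref{VS Definition 2}, phrased through the semijets $J^{2,\pm}$, since the argument rests on the Crandall--Ishii theorem on sums.

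Because $\mathbb{R}$ is unbounded and I only have linear growth, the supremum defining $M$ need not be attained, so first I would restore compactness by penalization: introduce $\beta>0$ and a smooth weight $\chi$ growing faster than linearly in $|x|$, and study $\sup_{x,i}\{u(x,i)-w(x,i)-\beta\chi(x)\}$, which is attained for every $\beta$ and converges to $M$ as $\beta\to0$. Since $\mathcal{M}=\{1,\dots,m\}$ is finite, I would simultaneously select the index $i^{\ast}$ realising the maximum over $i$; this discrete maximisation is precisely the device that tames the coupling operator $Qv(x,\cdot)(i)$.

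Next comes the \emph{doubling of variables}: for $\varepsilon>0$ I maximise
\[
\Phi(x,y)=u(x,i^{\ast})-w(y,i^{\ast})-\tfrac{1}{2\varepsilon}|x-y|^{2}-\beta\chi(x)
\]
over $(x,y)$, obtaining maximisers $(\bar x_{\varepsilon},\bar y_{\varepsilon})$ for which the standard lemma gives $|\bar x_{\varepsilon}-\bar y_{\varepsilon}|^{2}/\varepsilon\to0$ and $\bar x_{\varepsilon},\bar y_{\varepsilon}$ staying in a bounded set as $\varepsilon\to0$. The theorem on sums (see \cite{CIL1992}) then yields $(\eta_{1},\Lambda_{1})\in\overline{J}^{2,+}u(\bar x_{\varepsilon},i^{\ast})$ and $(\eta_{2},\Lambda_{2})\in\overline{J}^{2,-}w(\bar y_{\varepsilon},i^{\ast})$ with $\eta_{1}-\eta_{2}=\beta\chi'(\bar x_{\varepsilon})$, both $\eta_{1},\eta_{2}$ close to $p\doteq(\bar x_{\varepsilon}-\bar y_{\varepsilon})/\varepsilon$, and a matrix inequality bounding $\sigma^{2}(\bar x_{\varepsilon})\Lambda_{1}-\sigma^{2}(\bar y_{\varepsilon})\Lambda_{2}$ by $C|\bar x_{\varepsilon}-\bar y_{\varepsilon}|^{2}/\varepsilon$ using the Lipschitz continuity (A1) of $\sigma$.

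Finally I would insert these jets into the two viscosity inequalities. The variational-inequality (min) structure is handled first: the supersolution property forces $w(\bar y_{\varepsilon},i^{\ast})\geq g(\bar y_{\varepsilon},i^{\ast})$, so if the obstacle branch $u-g\leq0$ is the active one for $u$ at $\bar x_{\varepsilon}$, then $u(\bar x_{\varepsilon},i^{\ast})\leq g(\bar x_{\varepsilon},i^{\ast})$ and, by closeness of $\bar x_{\varepsilon},\bar y_{\varepsilon}$ and the Lipschitz property (A2) of $g$, one contradicts $M>0$ directly. Otherwise the PDE branch is active for $u$; subtracting the supersolution inequality for $w$ from the subsolution inequality for $u$ leaves $r\big(u(\bar x_{\varepsilon},i^{\ast})-w(\bar y_{\varepsilon},i^{\ast})\big)$ bounded above by a sum of a drift difference (of order $|\bar x_{\varepsilon}-\bar y_{\varepsilon}|$ by (A1)), a source difference (of order $|\bar x_{\varepsilon}-\bar y_{\varepsilon}|$ by (A2)), the second-order difference (controlled by the matrix inequality above), the coupling difference, and the quadratic gradient difference. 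The coupling term $Qu(\bar x_{\varepsilon},\cdot)(i^{\ast})-Qw(\bar y_{\varepsilon},\cdot)(i^{\ast})$ has the \emph{favourable} sign: because $i^{\ast}$ maximises $u(\cdot,i)-w(\cdot,i)$ over $i$ and $\lambda_{i^{\ast}j}\geq0$ for $j\neq i^{\ast}$, each summand $[u(\bar x_{\varepsilon},j)-w(\bar y_{\varepsilon},j)]-[u(\bar x_{\varepsilon},i^{\ast})-w(\bar y_{\varepsilon},i^{\ast})]$ is nonpositive, so this contribution is $\leq0$. Sending $\varepsilon\to0$ and then $\beta\to0$ would give $rM\leq0$, contradicting $M>0$.

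The hard part I expect is the quadratic gradient term. To leading order its difference is $\tfrac{\theta}{2}\big[\sigma^{2}(\bar x_{\varepsilon})-\sigma^{2}(\bar y_{\varepsilon})\big]p^{2}$, and although $|\bar x_{\varepsilon}-\bar y_{\varepsilon}|\to0$, the quantity $p=(\bar x_{\varepsilon}-\bar y_{\varepsilon})/\varepsilon$ may blow up, so $|\bar x_{\varepsilon}-\bar y_{\varepsilon}|\,p^{2}=|\bar x_{\varepsilon}-\bar y_{\varepsilon}|^{3}/\varepsilon^{2}$ need not vanish and a naive Lipschitz estimate fails. The resolution is that the gradient jets $\eta_{1},\eta_{2}$ are \emph{bounded} uniformly in $\varepsilon$: under (A3) the solutions are Lipschitz in $x$, so $|p|$ stays bounded and the quadratic difference is then $O(|\bar x_{\varepsilon}-\bar y_{\varepsilon}|)\to0$ by the Lipschitz continuity of $\sigma$. (In the special case $\sigma(x,i)\equiv\sigma(i)$ singled out in the Remark after (A3), this term vanishes identically, making the estimate trivial.) Controlling this single term is the crux; all remaining estimates are routine.
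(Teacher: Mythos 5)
Your overall route is the same as the paper's: a comparison argument via doubling of variables, a superlinearly growing weight to restore compactness on $\mathbb{R}$, the Crandall--Ishii theorem on sums, a two-case analysis on the $\min$ structure of (\ref{uniqueness HJB}), a sign argument for the coupling term $Qv(x,\cdot)(i)$ based on maximization over the finite set $\mathcal{M}$, and Lipschitz control of the slope $p=(\bar x_{\varepsilon}-\bar y_{\varepsilon})/\varepsilon$ to kill the quadratic gradient term (the paper does exactly this: its (\ref{further deduce}) and the boundedness of $(x_1^0-x_2^0)/\gamma$ come from the Lipschitz continuity of $V_1,V_2$, and its weight is $\rho(\psi(x_1)+\psi(x_2))$ with $\psi(x)=\exp(a(1+x^2)^{1/2})$). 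However, there is one genuine flaw in your coupling-term step: the \emph{order} in which you select $i^{*}$. You fix $i^{*}$ as a maximizer of $i\mapsto\sup_{x}\{u(x,i)-w(x,i)-\beta\chi(x)\}$ \emph{before} doubling, and then double only in the spatial variable with $i^{*}$ frozen. At the doubled maximizer $(\bar x_{\varepsilon},\bar y_{\varepsilon})$ you then assert that $[u(\bar x_{\varepsilon},j)-w(\bar y_{\varepsilon},j)]-[u(\bar x_{\varepsilon},i^{*})-w(\bar y_{\varepsilon},i^{*})]\leq 0$ for $j\neq i^{*}$ ``because $i^{*}$ maximises $u(\cdot,i)-w(\cdot,i)$ over $i$''. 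This does not follow: the maximality of $i^{*}$ is a statement about suprema of the \emph{undoubled} penalized function, while the inequality you need is a pointwise statement at the two distinct points $\bar x_{\varepsilon}\neq\bar y_{\varepsilon}$ produced by a doubling in which $j$ never competed; nothing prevents $u(\bar x_{\varepsilon},j)-w(\bar y_{\varepsilon},j)$ from exceeding $u(\bar x_{\varepsilon},i^{*})-w(\bar y_{\varepsilon},i^{*})$ for some $j\neq i^{*}$.

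The repair is standard and is precisely the paper's device: put $i$ \emph{inside} the doubled maximization, i.e.\ maximize $\Phi(x,y,i)=u(x,i)-w(y,i)-|x-y|^{2}/2\varepsilon-\beta\chi(x)$ over $\mathbb{R}\times\mathbb{R}\times\mathcal{M}$ (this is the paper's $\Psi(x_1,x_2,i)$). Since the penalty terms do not depend on $i$, maximality in the $i$-slot at the fixed pair $(\bar x_{\varepsilon},\bar y_{\varepsilon},i_{\varepsilon})$ gives $u(\bar x_{\varepsilon},j)-w(\bar y_{\varepsilon},j)\leq u(\bar x_{\varepsilon},i_{\varepsilon})-w(\bar y_{\varepsilon},i_{\varepsilon})$ for every $j$, hence the coupling difference is $\leq 0$ (the paper implements the same idea in the limit via $\Psi(x_0,x_0,i)\leq\Psi(x_1^0,x_2^0,i_0)$); finiteness of $\mathcal{M}$ lets you pass to a subsequence along which $i_{\varepsilon}$ is constant. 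With this correction your argument goes through and essentially coincides with the paper's proof. Two further remarks. First, your appeal to (A3) for Lipschitz bounds on $u,w$ (hence on $|p|$) technically exceeds the theorem's hypotheses, since (A3) concerns the value function while $V_1,V_2$ are arbitrary linear-growth viscosity solutions; but the paper's proof invokes ``the Lipschitz continuity of $V_1$ and $V_2$'' in exactly the same way, so this is an imprecision you share with the paper rather than a defect relative to it. Second, subtracting the two viscosity inequalities also produces cross terms between $p$ and the weight's gradient (of order $\beta|p||\chi^{\prime}(\bar x_{\varepsilon})|$ in your notation, $\rho|p|\sigma^{2}\psi^{\prime}$ in the paper's), which do not vanish as $\varepsilon\to0$ and must be absorbed by the weight itself; this is where the explicit exponential form of $\psi$ and the choice of the constant $a$ in the paper matter, and your closing claim that ``all remaining estimates are routine'' glosses over this same bookkeeping.
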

\noindent{\it Proof.}
We consider the following function defined on $\mathbb{R}\times \mathbb{R}\times\mathcal{M}$:
$$
\Psi(x_{1},x_{2},i)=V_{1}(x_{1},i)-V_{2}(x_{2},i)-|x_{1}-x_{2}|^{2}/2\gamma-\rho(\psi(x_{1})+\psi(x_{2})),
$$
where $0<\gamma<1$, $0<\rho<1$, and $\psi(x)=\exp(a(1+x^{2})^{\frac{1}{2}})$ with
$$
a=1\vee r^{\frac{1}{2}}\bigg(2\max_{i\in\mathcal{M}}\sup_{x\in \mathbb{R}}[|b(x,i)|+\sigma^{2}(x,i)/2]\bigg)^{-\frac{1}{2}}.
$$
It turns out that $\Psi(x_{1},x_{2},i)$ has a global maximum at a point $(x^{0}_{1},x^{0}_{2},i_{0})$,
since $\Psi(x_{1},x_{2},i)$ is continuous and $\lim_{|x_{1}|+|x_{2}|\rightarrow\infty}
\Psi(x_{1},x_{2},i)=-\infty$ for each $i\in\mathcal{M}$. In particular, we have $2\Psi(x^{0}_{1},x^{0}_{2},i_{0})\geq
\Psi(x^{0}_{1},x^{0}_{1},i_{0})+\Psi(x^{0}_{2},x^{0}_{2},i_{0})$,
i.e.,
\begin{equation}\label{quadratic}
\begin{aligned}
\frac{1}{\gamma}|x_{1}^{0}-x_{2}^{0}|^{2}
\leq [V_{1}(x_{1}^{0},i_{0})-V_{1}(x_{2}^{0},i_{0})]
+[V_{2}(x_{1}^{0},i_{0})-V_{2}(x_{2}^{0},i_{0})].
\end{aligned}
\end{equation}
It follows from the linear growth property of $V_{1}$ and $V_{2}$ that
\begin{equation}\label{convergence}
\begin{aligned}
|x_{1}^{0}-x_{2}^{0}|\leq \sqrt{K_{1}\gamma(1+|x_{1}^{0}|+|x_{2}^{0}|)},
\end{aligned}
\end{equation}
where the constant $K_{1}$ is independent of $\gamma$ and $\rho$.

Moreover, the choice of $(x_{1}^{0},x_{2}^{0},i_{0})$ implies
$\Psi(0,0,i_{0})\leq \Psi(x^{0}_{1},x^{0}_{2},i_{0})$, that is
\begin{equation*}
\begin{aligned}
\rho(\psi(x^{0}_{1})+\psi(x^{0}_{2}))
\leq& V_{1}(x^{0}_{1},i_{0})-V_{2}(x^{0}_{2},i_{0})
-\frac{1}{2\gamma}|x^{0}_{1}-x^{0}_{2}|^{2}
-\Psi(0,0,i_{0})\\
\leq& K_{2}(1+|x_{1}^{0}|+|x_{2}^{0}|),
\end{aligned}
\end{equation*}
where the constant $K_{2}$ is also independent of $\gamma$ and $\rho$.
Note that $\psi$ is of exponential growth but the right-hand side of
the above inequality only has a linear growth, hence, there should exist
a constant $K_{\rho}$, which is dependent on $\rho$ but independent of $\gamma$,
such that
\begin{equation}\label{bound}
\begin{aligned}
|x_{1}^{0}|+|x_{2}^{0}|\leq K_{\rho}.
\end{aligned}
\end{equation}
By (\ref{convergence}) and (\ref{bound}), there must be a subsequence
of $\gamma\rightarrow 0$ (still denoted by $\gamma$) and $x_{0}$ such that
$x_{1}^{0}\rightarrow x_{0}$ and $x_{2}^{0}\rightarrow x_{0}$. Further,
from the continuity of $V_{1}$ and $V_{2}$ in (\ref{quadratic}),
we deduce that
\begin{equation}\label{further deduce}
\begin{aligned}
\frac{|x_{1}^{0}-x_{2}^{0}|^{2}}{\gamma}
\rightarrow 0, \quad \gamma\rightarrow 0.
\end{aligned}
\end{equation}
Then, from Crandall et al. \cite[Theorem 3.2]{CIL1992}, there exist
$(\Lambda_{1},\Lambda_{2})\in \mathbb{R}^{2}$ such that
\begin{equation*}
\begin{aligned}
((x_{1}^{0}-x_{2}^{0})/\gamma,\Lambda_{1})
&\in\overline{J}^{2,+}(V_{1}(x^{0}_{1},i_{0})-\rho\psi(x^{0}_{1})),\\
((x_{1}^{0}-x_{2}^{0})/\gamma,\Lambda_{2})
&\in\overline{J}^{2,-}(V_{2}(x^{0}_{2},i_{0})+\rho\psi(x^{0}_{2})),
\end{aligned}
\end{equation*}
and
\begin{equation}\label{VS sigma}
\begin{aligned}
\left[
  \begin{array}{cc}
    \Lambda_{1} & 0 \\
    0 & -\Lambda_{2} \\
  \end{array}
\right]
\leq \frac{3}{\gamma}
\left[
  \begin{array}{cc}
    1 & -1 \\
    -1 &  1 \\
  \end{array}
\right].
\end{aligned}
\end{equation}
By the viscosity subsolution property of $V_{1}$ and the viscosity supersolution property
of $V_{2}$, we have
\begin{equation}\label{V1 subsolution}
%\left\{
\begin{aligned}
&\min\bigg\{rV_{1}(x_{1}^{0},i_{0})-b(x_{1}^{0},i_{0})[(x_{1}^{0}-x_{2}^{0})/\gamma+\rho\psi^{\prime}(x_{1}^{0})]\\
&+\frac{\theta}{2}\sigma^{2}(x_{1}^{0},i_{0})[(x_{1}^{0}-x_{2}^{0})/\gamma+\rho\psi^{\prime}(x_{1}^{0})]^{2}
-\frac{1}{2}\sigma^{2}(x_{1}^{0},i_{0})(\Lambda_{1}+\rho\psi^{\prime\prime}(x_{1}^{0}))\\
&-QV_{1}(x_{1}^{0},\cdot)(i_{0})-f(x_{1}^{0},i_{0}),V_{1}(x_{1}^{0},i_{0})-g(x_{1}^{0},i_{0})\bigg\}\leq0,
\end{aligned}
%\right.
\end{equation}
and
\begin{equation}\label{V2 supersolution}
%\left\{
\begin{aligned}
&\min\bigg\{rV_{2}(x_{2}^{0},i_{0})-b(x_{2}^{0},i_{0})[(x_{1}^{0}-x_{2}^{0})/\gamma-\rho\psi^{\prime}(x_{2}^{0})]\\
&+\frac{\theta}{2}\sigma^{2}(x_{2}^{0},i_{0})[(x_{1}^{0}-x_{2}^{0})/\gamma-\rho\psi^{\prime}(x_{2}^{0})]^{2}
-\frac{1}{2}\sigma^{2}(x_{2}^{0},i_{0})(\Lambda_{2}-\rho\psi^{\prime\prime}(x_{2}^{0}))\\
&-QV_{2}(x_{2}^{0},\cdot)(i_{0})-f(x_{2}^{0},i_{0}),V_{2}(x_{2}^{0},i_{0})-g(x_{2}^{0},i_{0})\bigg\}\geq0.
\end{aligned}
%\right.
\end{equation}
In the following, based on (\ref{V1 subsolution}), the analysis is divided into two cases.

\emph{Case 1.} If $V_{1}(x_{1}^{0},i_{0})-g(x_{1}^{0},i_{0})\leq0$,
then from $V_{2}(x_{2}^{0},i_{0})-g(x_{2}^{0},i_{0})\geq0$, we get
$V_{1}(x_{1}^{0},i_{0})-V_{2}(x_{2}^{0},i_{0})\leq g(x_{1}^{0},i_{0})-g(x_{2}^{0},i_{0})$.
Letting $\gamma\rightarrow0$ yields $V_{1}(x_{0},i_{0})-V_{2}(x_{0},i_{0})\leq 0$.

\emph{Case 2.} If $V_{1}(x_{1}^{0},i_{0})-g(x_{1}^{0},i_{0})>0$, then the left-hand side
of $\min\{\ldots,\ldots\}$ in (\ref{V1 subsolution}) should be $\leq0$. By subtracting
the left-hand side of $\min\{\ldots,\ldots\}$ in (\ref{V2 supersolution}),
and sending $\gamma\rightarrow 0$, it follows from (A1), (A2), (\ref{further deduce}), (\ref{VS sigma}) that
\begin{equation*}
\begin{aligned}
&V_{1}(x_{0},i_{0})-V_{2}(x_{0},i_{0})\\
\leq&\frac{2\rho}{r}\bigg(b(x_{0},i_{0})\psi^{\prime}(x_{0})
+\frac{1}{2}\sigma^{2}(x_{0},i_{0})\psi^{\prime\prime}(x_{0})\bigg)
+\frac{1}{r}[QV_{1}(x_{0},\cdot)(i_{0})-QV_{2}(x_{0},\cdot)(i_{0})].
\end{aligned}
\end{equation*}
On the other hand, since $\Psi(x_{1},x_{2},i)$ reaches its maximum at $(x_{1}^{0},x_{2}^{0},i_{0})$,
it follows that for any $(x,i)\in \mathbb{R}\times \mathcal{M}$,
\begin{equation*}
\begin{aligned}
V_{1}(x,i)-V_{2}(x,i)-2\rho\psi(x)=&\Psi(x,x,i)\\
\leq&\Psi(x^{0}_{1},x^{0}_{2},i_{0})
\leq V_{1}(x^{0}_{1},i_{0})-V_{2}(x^{0}_{2},i_{0})-\rho(\psi(x^{0}_{1})+\psi(x^{0}_{2})).
\end{aligned}
\end{equation*}
Letting $\gamma\rightarrow 0$, we have
\begin{equation*}
\begin{aligned}
V_{1}(x,i)-V_{2}(x,i)-2\rho\psi(x)
\leq V_{1}(x_{0},i_{0})-V_{2}(x_{0},i_{0})-2\rho\psi(x_{0}).
\end{aligned}
\end{equation*}
Particularly, taking $x=x_{0}$ in the above equation also leads to
$QV_{1}(x_{0},\cdot)(i_{0})-QV_{2}(x_{0},\cdot)(i_{0})\leq0$.
So no matter in Case 1 or Case 2, we both have $V_{1}(x,i)-V_{2}(x,i)\leq2\rho\psi(x)$.
Sending $\rho\rightarrow 0$, we have $V_{1}(x,i)-V_{2}(x,i)\leq 0$.
Using a similar argument, we can also obtain $V_{1}(x,i)-V_{2}(x,i)\geq 0$.
Thus we conclude that $V_{1}(x,i)=V_{2}(x,i)$, $\forall (x,i)\in \mathbb{R}\times \mathcal{M}$.
~\hfill $\Box$

\section{Smooth-fit principle}\label{Section SFP}

In this section, we prove the smooth-fit $C^{1}$ property of the value function $V(x,i)$
through the boundary $\partial\mathcal{D}_{i}$ between the continuation region
$\mathcal{D}_{i}\doteq\{x\in \mathbb{R}:V(x,i)>g(x,i)\}$ and its complement set, i.e.,
the stopping region $\mathcal{D}_{i}^{c}\doteq\{x\in \mathbb{R}:V(x,i)=g(x,i)\}$ via
a viscosity solution approach. In this section, we continue to adopt $\Theta(q)=q^{2}/2\theta$,
or to say, we shall work with the HJB equation (\ref{uniqueness HJB}), as the uniqueness
of viscosity solution is needed. For convenience, we denote the left-hand side of
$\min\{\ldots,\ldots\}$ in (\ref{uniqueness HJB})
as $H^{*}v(x,i)=rv-bv^{\prime}+\frac{\theta}{2}\sigma^{2}[v^{\prime}]^{2}
-\frac{1}{2}\sigma^{2}v^{\prime\prime}-Qv(x,\cdot)(i)-f$.
\begin{lemma}\label{Lemma Classical}
Let (A1)-(A3) hold. Assume further that: {\rm(\romannumeral1)} $g(x,i)\in C^{2}$,
and {\rm(\romannumeral2)} $\sigma(x,i)$ is non-degenerate, i.e., $\sigma^{2}(x,i)>0$.
Then, the value function $V(x,i)$ is the unique classical solution to $H^{*}v(x,i)=0$
on $\mathcal{D}_{i}$.
\end{lemma}
\noindent{\it Proof.}
\textbf{Step 1}. We first show that $V(x,i)$ is the unique viscosity solution to $H^{*}v(x,i)=0$
on $\mathcal{D}_{i}$. In fact, the viscosity supersolution property of $V(x,i)$ to $H^{*}v(x,i)=0$
is immediate from that of $V(x,i)$ to (\ref{uniqueness HJB}). On the other hand, for any $i\in\mathcal{M}$,
let $\overline{x}\in \mathcal{D}_{i}$ and $\varphi(x)\in C^{2}$ be such that $V(x,i)-\varphi(x)$
attains its local maximum at $x=\overline{x}$. By the definition of $\mathcal{D}_{i}$, we have
$V(\overline{x},i)>g(\overline{x},i)$, hence from the viscosity subsolution property of $V(x,i)$
to (\ref{uniqueness HJB}), we have
\begin{equation*}
\begin{aligned}
rV(\overline{x},i)-b(\overline{x},i)\varphi^{\prime}(\overline{x},i)
+\frac{\theta}{2}\sigma^{2}(\overline{x},i)[\varphi^{\prime}(\overline{x},i)]^{2}
-\frac{1}{2}\sigma^{2}(\overline{x},i)\varphi^{\prime\prime}(\overline{x},i)&\\
-QV(\overline{x},\cdot)(i)-f(\overline{x},i)&\leq0,
\end{aligned}
\end{equation*}
which leads to the viscosity subsolution property of $V(x,i)$ to $H^{*}v(x,i)=0$. Moreover,
similar to Theorem \ref{uniqueness theorem}, we can prove the uniqueness of viscosity solution
to $H^{*}v(x,i)=0$ on $\mathcal{D}_{i}$ (In the case that $\mathcal{D}_{i}$ is a bounded domain,
we even no longer need the function $\psi$ in $\Psi$, which is introduced to deal with
an unbounded domain by imposing restrictions on the growth of $\Psi$, in the proof of
Theorem \ref{uniqueness theorem}).

\textbf{Step 2}. We rewrite the equation $H^{*}v(x,i)=0$ as
\begin{equation*}
\begin{aligned}
v^{\prime\prime}(x,i)
=\frac{2r}{\sigma^{2}(x,i)}v(x,i)-\frac{2b(x,i)}{\sigma^{2}(x,i)}v^{\prime}(x,i)+\theta[v^{\prime}(x,i)]^{2}
-\sum_{j=1}^{m}\frac{2\lambda_{ij}}{\sigma^{2}(x,i)}v(x,j)-\frac{2f(x,i)}{\sigma^{2}(x,i)}.
\end{aligned}
\end{equation*}
Let
$$
\mathbf{y}(x)\doteq(v(x,1),\ldots,v(x,m),v^{\prime}(x,1),\ldots,v^{\prime}(x,m))^{\top},
$$
where the superscript $\top$ denotes the transpose of a vector. Then, the above equation
can be further rewritten as a matrix form:
$$
\mathbf{y}^{\prime}(x)=\mathbf{A}(x,\mathbf{y}(x))\mathbf{y}(x)+\mathbf{F}(x),
$$
where
$$
\mathbf{F}(x)=\left(0,\ldots,0,\frac{-2f(x,1)}{\sigma^{2}(x,1)},\ldots,\frac{-2f(x,m)}{\sigma^{2}(x,m)}\right)^{\top},
$$
and
\begin{equation*}
%\left\{
\begin{aligned}
\mathbf{A}(x,\mathbf{y})=\left[
         \begin{array}{cc}
           \mathbf{O}_{m} & \mathbf{E}_{m} \\
           \mathbf{A}_{21}(x) & \mathbf{A}_{22}(x,\mathbf{y}) \\
         \end{array}
       \right]_{2m\times2m},
\end{aligned}
%\right.
\end{equation*}
in which $\mathbf{O}_{m}$ is the $(m\times m)$ zero matrix, $\mathbf{E}_{m}$ is the $(m\times m)$
identity matrix,
\begin{equation*}
%\left\{
\begin{aligned}
\mathbf{A}_{21}(x)
=\left[
   \begin{array}{cccc}
     \frac{2(r-\lambda_{11})}{\sigma^{2}(x,1)} & \frac{-2\lambda_{12}}{\sigma^{2}(x,1)} & \cdots & \frac{-2\lambda_{1m}}{\sigma^{2}(x,1)} \\
     \frac{-2\lambda_{21}}{\sigma^{2}(x,2)} & \frac{2(r-\lambda_{22})}{\sigma^{2}(x,2)} & \cdots & \frac{-2\lambda_{2m}}{\sigma^{2}(x,2)} \\
     \vdots & \vdots &  & \vdots \\
     \frac{-2\lambda_{m1}}{\sigma^{2}(x,m)} & \frac{-2\lambda_{m2}}{\sigma^{2}(x,m)} & \cdots & \frac{2(r-\lambda_{mm})}{\sigma^{2}(x,m)} \\
   \end{array}
 \right]_{m\times m},
\end{aligned}
%\right.
\end{equation*}
and
\begin{equation*}
%\left\{
\begin{aligned}
\mathbf{A}_{22}(x,\mathbf{y})
=\left[
   \begin{array}{cccc}
     \frac{-2b(x,1)}{\sigma^{2}(x,1)}+\theta v^{\prime}(\cdot,1) & 0 & \cdots & 0 \\
     0 & \frac{-2b(x,1)}{\sigma^{2}(x,2)}+\theta v^{\prime}(\cdot,2) & \cdots & 0 \\
     \vdots  &  \vdots & \ddots  & \vdots \\
     0 & 0 & \cdots & \frac{-2b(x,m)}{\sigma^{2}(x,m)}+\theta v^{\prime}(\cdot,m) \\
   \end{array}
 \right]_{m\times m}.
\end{aligned}
%\right.
\end{equation*}

\textbf{Step 3}. Note that the value functions $V(x,i)$, $i\in\mathcal{M}$, are Lipschitz continuous,
thus they are differentiable almost everywhere. Denote $L$ the common Lipschitz constant for all $V(x,i)$.
Denote $E_{i}$ the set of points at which $V(x,i)$ is non-differentiable, then $E_{i}$ is a zero measure
set. Moreover, $E\doteq \cup_{i\in\mathcal{M}}E_{i}$ is also a zero measure set as $\mathcal{M}$ is
a finite set. Now, for any $i\in\mathcal{M}$ and any $x_{0}\in\mathcal{D}_{i}$, consider an interval
$[x_{0}-l,x_{0}+l]\subset\mathcal{D}_{i}$ for some $l>0$. Note that $V(x,i)$, $i\in\mathcal{M}$, are
bounded in $[x_{0}-l,x_{0}+l]$; without loss of generality, we denote the common bound also by $L$,
i.e., $|V(x,i)|\leq L$, $i\in\mathcal{M}$. For a given $b>0$, let $M$ be the maximum of
$|\mathbf{A}(x,\mathbf{y})\mathbf{y}+\mathbf{F}(x)|$ in the region
\begin{equation*}
%\left\{
\begin{aligned}
I_{0}:\quad x\in[x_{0}-l,x_{0}+l],\quad \mathbf{y}\in [-L-b,L+b]^{2m}.
\end{aligned}
%\right.
\end{equation*}
We do not know in advance if all $V^{\prime}(x_{0},i)$, $i\in\mathcal{M}$, exist, but we can indeed,
in view of that $E$ is a zero measure set, select an $x_{1}$ near $x_{0}$ (let $|x_{1}-x_{0}|=\varepsilon$
for some sufficiently small $\varepsilon<\min\{l/2,b/M\}$) such that all $V^{\prime}(x_{1},i)$, $i\in\mathcal{M}$,
exist. Then, let us consider the following initial value problem:
\begin{equation}\label{initial value problem}
\left\{
\begin{aligned}
\mathbf{y}^{\prime}(x)=&\mathbf{A}(x,\mathbf{y}(x))\mathbf{y}(x)+\mathbf{F}(x),\\
\mathbf{y}(x_{1})=&\mathbf{y}_{1},
\end{aligned}
\right.
\end{equation}
where
$$
\mathbf{y}_{1}\doteq(V(x_{1},1),\ldots,V(x_{1},m),V^{\prime}(x_{1},1),\ldots,V^{\prime}(x_{1},m))^{\top},
$$
in the following region
\begin{equation*}
%\left\{
\begin{aligned}
I_{1}:\quad |x-x_{1}|\leq a,\quad |\mathbf{y}-\mathbf{y}_{1}|\leq b,
\end{aligned}
%\right.
\end{equation*}
for some $a$ satisfying $\varepsilon<a<l-\varepsilon$; it means $I_{1}\subset I_{0}$. The function
$\mathbf{A}(x,\mathbf{y})\mathbf{y}+\mathbf{F}(x)$ is continuous w.r.t. $(x,\mathbf{y})$ and
Lipschitz continuous w.r.t. $\mathbf{y}$ on the region $I_{1}$. From the Picard-Lindel\"{o}f
theorem (see Hartman \cite[Chapter \uppercase\expandafter{\romannumeral2}, Theorem 1.1]{Hartman2002}),
the initial value problem (\ref{initial value problem}) has a unique classical solution $\mathbf{y}(x)$
in $[x_{1}-h,x_{1}+h]$, where (recall that $M$ is an upper bound of
$|\mathbf{A}(x,\mathbf{y})\mathbf{y}+\mathbf{F}(x)|$ on $I_{1}\subset I_{0}$)
\begin{equation*}
%\left\{
\begin{aligned}
h\doteq\min\bigg\{a,\frac{b}{M}\bigg\}>\varepsilon.
\end{aligned}
%\right.
\end{equation*}
In particular, the equation $H^{*}v(x,i)=0$ with initial value
$v(x_{1},i)=V(x_{1},i)$ and $v^{\prime}(x_{1},i)=V^{\prime}(x_{1},i)$ has a unique classical
solution $v(x,i)$ in $[x_{1}-h,x_{1}+h]$. Such a classical solution is a viscosity solution as well.

Note that $[x_{1}-h,x_{1}+h]\subset \mathcal{D}_{i}$.
Remember that $V(x,i)$ is a viscosity solution to $H^{*}v(x,i)=0$ on $\mathcal{D}_{i}$, especially,
in $(x_{1}-h,x_{1}+h)$. From the uniqueness of viscosity solution, we have $v(x,i)\equiv V(x,i)$
in $(x_{1}-h,x_{1}+h)$. So $V(x,i)$ is smooth $C^{2}$ in $(x_{1}-h,x_{1}+h)$. It yields that
$V^{\prime}(x_{0},i)$ indeed exists since $x_{0}\in(x_{1}-h,x_{1}+h)$. Moreover,
if $x_{0}\in\mathcal{D}_{j}$ for $j\neq i$, we can repeat the above proof to show
the well-posedness of $V^{\prime}(x_{0},j)$; otherwise, if $x_{0}\notin\mathcal{D}_{j}$,
then $x_{0}\in\mathcal{D}_{j}^{c}$ and we have $V(x,j)\equiv g(x,j)$, which implies
that $V(x,j)$ is differentiable at $x_{0}$ with $V^{\prime}(x_{0},j)=g^{\prime}(x_{0},j)$.
Finally, we show that all $V^{\prime}(x_{0},i)$, $i\in\mathcal{M}$, exist for any point
$x_{0}\in\mathcal{D}_{i}$.

\textbf{Step 4}. Now, let us consider the initial value problem (\ref{initial value problem})
with new initial condition $\mathbf{y}(x_{0})=\mathbf{y}_{0}$, where
$$
\mathbf{y}_{0}\doteq(V(x_{0},1),\ldots,V(x_{0},m),V^{\prime}(x_{0},1),\ldots,V^{\prime}(x_{0},m))^{\top}.
$$
As in Step 3, we can find a solution interval $(x_{0}-h,x_{0}+h)$ in which we have $v(x,i)\equiv V(x,i)$,
and thus $V(x,i)$ is smooth $C^{2}$ in $(x_{0}-h,x_{0}+h)$. From the arbitrariness of $x_{0}\in\mathcal{D}_{i}$,
we have $V(x,i)$ is smooth $C^{2}$ on $\mathcal{D}_{i}$. It is known that a smooth enough viscosity solution
is a classical solution, hence $V(x,i)$ satisfies $H^{*}v(x,i)=0$ on $\mathcal{D}_{i}$ in the classical sense.
~\hfill $\Box$
\begin{theorem}\label{Theorem SFP}
Let (A1)-(A3) hold. Assume further that: {\rm(\romannumeral1)} $g(x,i)\in C^{2}$,
and {\rm(\romannumeral2)} $\sigma(x,i)$ is non-degenerate, i.e., $\sigma^{2}(x,i)>0$.
Then, the value function $V(x,i)$ is smooth $C^{1}$ on $\partial\mathcal{D}_{i}$.
\end{theorem}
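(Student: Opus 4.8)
The plan is to combine the regularity already available on each side of the free boundary with the viscosity \emph{super}solution property of $V$, using the non-degeneracy of $\sigma$ as the decisive ingredient. By assumption (\romannumeral1) together with the uniqueness of viscosity solutions (Theorem \ref{uniqueness theorem}), on the continuation region $\mathcal{D}_{i}$ the value function $V(\cdot,i)$ coincides with the unique smooth solution of $H^{*}v(x,i)=0$, so $V(\cdot,i)\in C^{2}(\mathcal{D}_{i})$; on the stopping region $\mathcal{D}_{i}^{c}$ we have $V(\cdot,i)=g(\cdot,i)\in C^{2}$ by assumption (\romannumeral2); and everywhere $V(x,i)\geq g(x,i)$ by (\ref{V>=g}). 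Fix a boundary point $\overline{x}\in\partial\mathcal{D}_{i}$; both one-sided derivatives $p^{-}=V^{\prime}(\overline{x}^{-},i)$ and $p^{+}=V^{\prime}(\overline{x}^{+},i)$ then exist, and since $V(\cdot,i)$ is already $C^{1}$ on each open side it suffices to prove $p^{-}=p^{+}$.

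First I would record an elementary geometric inequality. Take a generic boundary point with, say, continuation on one side and stopping on the other (the opposite orientation is symmetric). On the stopping side $V-g\equiv0$, so the corresponding one-sided derivative of $V$ equals $g^{\prime}(\overline{x})$; on the continuation side $w:=V-g>0$ with $w(\overline{x})=0$, so the sign of the one-sided difference quotients of $w$ forces the continuation-side slope to the correct side of $g^{\prime}(\overline{x})$. In either orientation this yields the convex-kink inequality $p^{-}\leq p^{+}$, with one of $p^{\pm}$ equal to $g^{\prime}(\overline{x})$.

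The core step is to exclude a strict kink $p^{-}<p^{+}$ by contradiction. If $p^{-}<p^{+}$, then for any fixed slope $\eta\in(p^{-},p^{+})$ and \emph{any} second-order coefficient $\Lambda\in\mathbb{R}$, the quadratic $\varphi(x)=V(\overline{x},i)+\eta(x-\overline{x})+\tfrac{1}{2}\Lambda(x-\overline{x})^{2}$ touches $V(\cdot,i)$ from below at $\overline{x}$: the strict linear gaps $\eta<p^{+}$ and $\eta>p^{-}$ on the two sides dominate the $O((x-\overline{x})^{2})$ term for every $\Lambda$, so $V(\cdot,i)-\varphi$ has a local minimum at $\overline{x}$. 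Feeding $\varphi$ into the viscosity supersolution property (Definition \ref{VS Definition 2}, equivalently Definition \ref{VS Definition 1} with a local minimum) and using $V(\overline{x},i)-g(\overline{x},i)=0$, the inner minimum in the HJB inequality collapses and forces $H^{*}$ evaluated at $(\eta,\Lambda)$ to be nonnegative:
\[
rV(\overline{x},i)-b(\overline{x},i)\eta+\frac{\theta}{2}\sigma^{2}(\overline{x},i)\eta^{2}-\frac{1}{2}\sigma^{2}(\overline{x},i)\Lambda-QV(\overline{x},\cdot)(i)-f(\overline{x},i)\geq0
\]
for every $\Lambda\in\mathbb{R}$. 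Since $\sigma^{2}(\overline{x},i)>0$ by assumption (\romannumeral3) and all other terms are bounded ($\eta$ being fixed), letting $\Lambda\rightarrow+\infty$ drives the left-hand side to $-\infty$, a contradiction. Hence $p^{-}=p^{+}$, so $V(\cdot,i)$ is $C^{1}$ across $\overline{x}$, and as $\overline{x}\in\partial\mathcal{D}_{i}$ is arbitrary the smooth-fit property follows.

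The main obstacle is the core step: one must recognize that at a convex kink it is precisely the subdifferential $J^{2,-}V(\overline{x},i)$ (equivalently, the family of $C^{2}$ lower test functions) that is non-empty and, crucially, admits an \emph{arbitrarily large} second-order coefficient $\Lambda$, whereas the superdifferential is empty and the subsolution inequality is vacuous. The non-degeneracy $\sigma^{2}>0$ is then exactly what converts this freedom in $\Lambda$ into a contradiction; without it the diffusion term would drop out and the argument would collapse. Some care is also needed to verify rigorously that the quadratic test function stays below $V$ on both sides for every $\Lambda$, which is where the strictness of the kink and the one-sided smoothness supplied by assumptions (\romannumeral1)–(\romannumeral2) enter.
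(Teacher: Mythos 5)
Your proposal is correct and follows essentially the same route as the paper's proof: Step 1 identifies $V$ with the unique smooth solution of $H^{*}v(x,i)=0$ on $\mathcal{D}_{i}$ via uniqueness of viscosity solutions, then the one-sided derivatives and the kink inequality $V^{\prime}_{-}(\overline{x},i)\leq g^{\prime}(\overline{x},i)=V^{\prime}_{+}(\overline{x},i)$ follow from $V\geq g$ with equality at $\overline{x}$, and a strict kink is excluded by plugging a quadratic test function touching $V$ from below into the viscosity supersolution property and using $\sigma^{2}(\overline{x},i)>0$ to reach a contradiction. The only cosmetic difference is that you let the second-order coefficient $\Lambda\rightarrow+\infty$ directly, whereas the paper writes it as $1/(2\mu)$ and sends $\mu\rightarrow0$.
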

\noindent{\it Proof.}
We first verify that, for any $\overline{x}\in\partial\mathcal{D}_{i}$,
$V(x,i)$ admits a left derivative $V^{\prime}_{-}(\overline{x},i)$ and a right
derivative $V^{\prime}_{+}(\overline{x},i)$. Without loss of generality,
we suppose that $\overline{x}$ is a right boundary of $\mathcal{D}_{i}$, that is,
the left-side neighborhood $(\overline{x}-\delta,\overline{x})\subset\mathcal{D}_{i}$
and the right-side neighborhood $(\overline{x},\overline{x}+\delta)\subset\mathcal{D}_{i}^{c}$
for some $\delta>0$; the other case that $\overline{x}$ is a left boundary of $\mathcal{D}_{i}$
can be treated similarly. In $(\overline{x}-\delta,\overline{x})$, we have proved
in Lemma \ref{Lemma Classical} that $V(x,i)=v(x,i)$, the unique classical solution
to $H^{*}v(x,i)=0$ on $\mathcal{D}_{i}$, thus we have $V^{\prime}_{-}(\overline{x},i)=v^{\prime}(\overline{x},i)$.
On the other hand, in $(\overline{x},\overline{x}+\delta)$, it follows from $V=g$ on $\mathcal{D}_{i}^{c}$ that
$V^{\prime}_{+}(\overline{x},i)=g^{\prime}(\overline{x},i)$.

In view of $V(\overline{x},i)=g(\overline{x},i)$ and $V(x,i)\geq g(x,i)$ for any
$(x,i)\in \mathbb{R}\times \mathcal{M}$, we obtain the following relation:
\begin{equation*}
\begin{aligned}
\frac{V(x,i)-V(\overline{x},i)}{x-\overline{x}}
\leq\frac{g(x,i)-g(\overline{x},i)}{x-\overline{x}},\quad x<\overline{x},
\end{aligned}
\end{equation*}
which means that $V^{\prime}_{-}(\overline{x},i)\leq g^{\prime}(\overline{x},i)=V^{\prime}_{+}(\overline{x},i)$.
Suppose, on the contrary, $V$ is not $C^{1}$ at $\overline{x}$,
then we have $V^{\prime}_{-}(\overline{x},i)<V^{\prime}_{+}(\overline{x},i)$.
Taking an $\eta\in (V^{\prime}_{-}(\overline{x},i),V^{\prime}_{+}(\overline{x},i))$,
we consider the following smooth test function:
\begin{equation*}
\begin{aligned}
\varphi_{\mu}(x)=V(\overline{x},i)+\eta(x-\overline{x})+\frac{1}{2\mu}(x-\overline{x})^{2}.
\end{aligned}
\end{equation*}
It can be seen that $V$ dominates locally $\varphi_{\mu}$ in a neighborhood of $\overline{x}$,
i.e., $V-\varphi_{\mu}$ achieves a local minimum at $\overline{x}$. Hence, from the
viscosity supersolution property of $V$ to the HJB equation (\ref{uniqueness HJB}), we have
\begin{equation*}
\begin{aligned}
rV(\overline{x},i)-b(\overline{x},i)\eta+\frac{\theta}{2}\sigma^{2}(\overline{x},i)\eta^{2}
-\frac{1}{2\mu}\sigma^{2}(\overline{x},i)-QV(\overline{x},\cdot)(i)-f(\overline{x},i)\geq0,
\end{aligned}
\end{equation*}
which, by sending $\mu\rightarrow0$ and noting the non-degenerate condition $\sigma^{2}>0$,
yields a contradiction.
~\hfill $\Box$

\section{Verification theorem}\label{Section VT}

Next, we present the verification theorem, which gives a sufficient way to determine whether
a candidate is optimal. It also suggests that the solution to the HJB equation (if exists)
coincides with the value function. In what follows, we denote
$H^{q}v(x,i)=rv(x,i)-\mathcal{L}^{q}v(x,i)-Qv(x,\cdot)(i)-f(x,i)-\Theta(q)$.
\begin{theorem}\label{VT}
Let $v(x,i)$ be a real-valued function satisfying the following conditions:
{\rm(\romannumeral1)} $v(\cdot,i)\in C^{2}(\mathbb{R}\backslash\partial \mathcal{D}_{i})\cap C^{1}(\mathbb{R})$,
{\rm(\romannumeral2)} $v(x,i)\geq g(x,i)$,
{\rm(\romannumeral3)} $\sup_{q\in \mathbb{R}}H^{q}v(x,i)\geq0$.
Then, we have $v(x,i)\geq V(x,i)$ for all $(x,i)\in \mathbb{R}\times \mathcal{M}$.

Assume further that: {\rm(\romannumeral4)} For $x\in \mathcal{D}_{i}$, $\sup_{q\in \mathbb{R}}H^{q}v(x,i)=0$,
and there exists a feedback map $q^{*}(x,i)$ such that
\begin{equation}\label{q*}
%\left\{
\begin{aligned}
q^{*}(x,i)=\arg\sup_{q\in \mathbb{R}}H^{q}v(x,i).
\end{aligned}
%\right.
\end{equation}
Let $q_{t}^{*}\doteq q^{*}(X_{t}^{q_{t}^{*}},\alpha_t)\in\mathcal{U}$. Then, we have
\begin{equation}\label{optimal stopping rule}
%\left\{
\begin{aligned}
\tau^{*}\doteq\inf\{t\geq0:X_{t}^{q_{t}^{*}}\notin \mathcal{D}_{\alpha_{t}}\}
\end{aligned}
%\right.
\end{equation}
is a robust optimal stopping time and $v(x,i)=V(x,i)$ is the value function.
\end{theorem}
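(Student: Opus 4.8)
The plan is to prove both assertions by applying the regime-switching It\^{o} formula to $e^{-rt}v(X_{t},\alpha_{t})$ and comparing with the reward functional $J$. The basic identity, obtained for any admissible $q_{t}$ and any stopping time $\tau$ after localizing on $[0,\tau\wedge\tau_{n}]$ with $\tau_{n}=\inf\{t:|X_{t}|\geq n\}\wedge n$, taking $\mathbb{E}^{\mathbb{Q}}$ (so the stochastic integral against $B^{\mathbb{Q}}$ and the compensated jump martingale of $\alpha_{t}$ drop out), and letting $n\to\infty$ under the requisite integrability, reads
\begin{equation*}
v(x,i)-J(x,i;\tau,q_{t})=\mathbb{E}^{\mathbb{Q}}\big[e^{-r\tau}(v-g)(X_{\tau},\alpha_{\tau})\big]
+\mathbb{E}^{\mathbb{Q}}\Big[\int_{0}^{\tau}e^{-rt}H^{q_{t}}v(X_{t},\alpha_{t})\,dt\Big].
\end{equation*}
Here I use the bookkeeping $rv-\mathcal{L}^{q}v-Qv=H^{q}v+f+\Theta(q)$, so that the running reward $f+\Theta(q)$ is exactly absorbed into the definition of $H^{q}v$, and the two sides of the identity line up term by term.

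For the first assertion I fix an arbitrary $\tau\in\mathcal{S}$ and, given $\varepsilon>0$, invoke condition (\romannumeral3): since $\sup_{q}H^{q}v(x,i)\geq0$, a measurable selection provides a feedback $q_{\varepsilon}(x,i)$ with $H^{q_{\varepsilon}}v\geq-\varepsilon$ everywhere. Running the state under $q_{t}=q_{\varepsilon}(X_{t},\alpha_{t})$ and using condition (\romannumeral2) ($v\geq g$) in the identity makes the boundary term nonnegative and the integral term at least $-\varepsilon/r$, giving $v(x,i)-J(x,i;\tau,q_{\varepsilon})\geq-\varepsilon/r$ and hence $\inf_{q}J(x,i;\tau,q)\leq v(x,i)+\varepsilon/r$. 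Letting $\varepsilon\to0$ and then taking $\sup_{\tau}$ yields $V\leq v$.

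For the second assertion I exploit the crucial feature that, because the alternative models arise from $\mathbb{P}$ through an equivalent (Girsanov) change of measure, the paths of $X$ are unchanged, so $\tau^{*}$ of (\ref{optimal stopping rule}) is one and the same $\mathcal{F}_{t}$-stopping time under every $\mathbb{Q}$. Applying the identity with $\tau=\tau^{*}$ and an arbitrary $q_{t}$: at $t=\tau^{*}$ the state has left the continuation region, so $X_{\tau^{*}}\in\mathcal{D}^{c}_{\alpha_{\tau^{*}}}$ and $(v-g)(X_{\tau^{*}},\alpha_{\tau^{*}})=0$, killing the boundary term; for $t<\tau^{*}$ one has $X_{t}\in\mathcal{D}_{\alpha_{t}}$, where condition (\romannumeral4) gives $\sup_{q}H^{q}v=0$, hence $H^{q_{t}}v(X_{t},\alpha_{t})\leq0$. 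The identity then forces $J(x,i;\tau^{*},q_{t})\geq v(x,i)$ for every $q_{t}$, so $\inf_{q}J(x,i;\tau^{*},q)\geq v(x,i)$ and therefore $V\geq v$. Combined with the first part this gives $v=V$; moreover $\inf_{q}J(\tau^{*},q)=V$ shows $\tau^{*}$ is robustly optimal, and replacing $q_{t}$ by the saddle feedback $q^{*}_{t}$ of (\ref{q*}) makes $H^{q^{*}}v=0$ on $\mathcal{D}_{i}$, turning the last inequality into the equality $J(x,i;\tau^{*},q^{*})=v(x,i)=V(x,i)$.

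The hard part will be the legitimacy of the It\^{o} step under condition (\romannumeral1): $v(\cdot,i)$ is only $C^{1}$ across $\partial\mathcal{D}_{i}$ and merely $C^{2}$ off it, so the classical It\^{o} formula does not apply directly. I would resort to the generalized It\^{o} (It\^{o}--Meyer/Peskir) formula, observing that because $v^{\prime}$ is continuous there is no local-time contribution on $\partial\mathcal{D}_{i}$, and because $\sigma^{2}>0$ the occupation time of the boundary has zero Lebesgue measure, so that $\tfrac{1}{2}\sigma^{2}v^{\prime\prime}$ is defined $dt$-a.e.\ along the trajectory and the formula remains valid. Secondary technical points to handle carefully are the measurable selection and the admissibility (the martingale condition (\ref{admissible q condition})) of the near-optimal feedback $q_{\varepsilon}$ in the first part, the integrability needed to remove the localization $\tau_{n}$ and pass to the limit by dominated convergence (Assumptions (A1)--(A3) being deliberately absent here), and the a.s.\ finiteness of $\tau^{*}$, with the event $\{\tau^{*}=\infty\}$ controlled through the discount factor $r>0$.
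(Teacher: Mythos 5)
Your proposal is correct and follows essentially the same route as the paper's own proof: It\^{o}'s formula applied to $e^{-rt}v(X_{t},\alpha_{t})$, conditions (ii)--(iii) together with a (near-)minimizing feedback selection to obtain $V\leq v$, and condition (iv) combined with $v=g$ at $X_{\tau^{*}}$ to force $\inf_{q_{t}}J(x,i;\tau^{*},q_{t})=v(x,i)$, hence $v=V$ and the robust optimality of $\tau^{*}$. The only divergences are presentational or technical: the paper tracks when its chain of inequalities becomes equalities rather than writing your single identity (and simply asserts that the infimum over $\mathcal{U}$ of the nonnegative penalty term is zero, where you make the $\varepsilon$-selection explicit), and it patches the $C^{1}$-across-$\partial\mathcal{D}_{i}$ issue via {\O}ksendal's smooth-approximation argument rather than your It\^{o}--Meyer/occupation-time argument, which, as a caveat, relies on the non-degeneracy $\sigma^{2}>0$ that this theorem does not assume.
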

\begin{remark}
It should be noted that along the boundary $\partial \mathcal{D}_{i}$, the functions $v(\cdot,i)$
only belongs to $C^{1}$ but does not necessarily belong to $C^{2}$. In this situation, we can take
advantage of the smooth approximation argument for variational inequalities developed
by {\O}ksendal \cite[Theorem 10.4.1 and Appendix D]{Oksendal2003} to supply the needed
smoothness when applying It\^{o}'s formula. Therefore, for convenience, in the following proof
we simply consider $v(\cdot,i)$ to be $C^{2}$ on the whole space; see also Guo and Zhang \cite{GZ2005}.
\end{remark}
\noindent{\it Proof.}
\textbf{Step 1.} For any $\tau\in\mathcal{S}$ and $q_{t}\in\mathcal{U}$,
by applying It\^{o}'s formula to $e^{-rt}v(X_{t}^{q_{t}},\alpha_{t})$
between 0 and $\tau$, we have
\begin{equation}\label{VT proof 1}
%\left\{
\begin{aligned}
&\mathbb{E}^{\mathbb{Q}}[e^{-r\tau}v(X_{\tau}^{q_{t}},\alpha_{\tau})]-v(x,i)\\
=&\mathbb{E}^{\mathbb{Q}}\bigg[\int_{0}^{\tau}e^{-rt}\bigg(-rv(X_{t}^{q_{t}},\alpha_{t})
+\mathcal{L}^{q_{t}}v(X_{t}^{q_{t}},\alpha_{t})+Qv(X_{t}^{q_{t}},\cdot)(\alpha_{t})\bigg)dt\bigg]\\
\leq&\mathbb{E}^{\mathbb{Q}}\bigg[\int_{0}^{\tau}e^{-rt}
\bigg(\mathcal{L}^{q_{t}}v(X_{t}^{q_{t}},\alpha_{t})-f(X_{t}^{q_{t}},\alpha_{t})
-\inf_{q\in \mathbb{R}}\bigg\{\mathcal{L}^{q}v(X_{t}^{q},\alpha_{t})+\Theta(q)\bigg\}\bigg)dt\bigg],
\end{aligned}
%\right.
\end{equation}
where the inequality follows from condition (\romannumeral3).
In view of condition (\romannumeral2), we obtain
\begin{equation}\label{VT proof 2}
%\left\{
\begin{aligned}
J(x,i;\tau,q_{t})
\leq\mathbb{E}^{\mathbb{Q}}\bigg[e^{-r\tau}v(X_{\tau}^{q_{t}},\alpha_{\tau})
+\int_{0}^{\tau}e^{-rt}\bigg(f(X_{t}^{q_{t}},\alpha_{t})+\Theta(q_{t})\bigg)dt\bigg].
\end{aligned}
%\right.
\end{equation}
From (\ref{VT proof 1}) and (\ref{VT proof 2}), we have
\begin{equation}\label{VT proof 6}
%\left\{
\begin{aligned}
J(x,i;\tau,q_{t})
\leq v(x,i)+\mathbb{E}^{\mathbb{Q}}\bigg[\int_{0}^{\tau}e^{-rt}
\bigg(\Pi^{q_{t}}_{t}-\inf_{q\in \mathbb{R}}\Pi^{q}_{t}\bigg)dt\bigg],
\end{aligned}
%\right.
\end{equation}
where we denote $\Pi^{q}_{t}=\mathcal{L}^{q}v(X_{t}^{q},\alpha_{t})+\Theta(q)$.
By taking infimum over $q_{t}\in\mathcal{U}$ in (\ref{VT proof 6}), we have
\begin{equation}\label{VT proof 3}
%\left\{
\begin{aligned}
\widetilde{J}(x,i;\tau)\doteq&\inf_{q_{t}\in\mathcal{U}}J(x,i;\tau,q_{t})\\
\leq& v(x,i)+\inf_{q_{t}\in\mathcal{U}}
\mathbb{E}^{\mathbb{Q}}\bigg[\int_{0}^{\tau}e^{-rt}
\bigg(\Pi^{q_{t}}_{t}-\inf_{q\in \mathbb{R}}\Pi^{q}_{t}\bigg)dt\bigg]
=v(x,i),
\end{aligned}
%\right.
\end{equation}
i.e., $\widetilde{J}(x,i;\tau)\leq v(x,i)$.
Further, from the arbitrariness of $\tau\in\mathcal{S}$, it follows that
\begin{equation}\label{VT proof 4}
%\left\{
\begin{aligned}
V(x,i)=\sup_{\tau\in\mathcal{S}}\widetilde{J}(x,i;\tau)\leq v(x,i).
\end{aligned}
%\right.
\end{equation}
\textbf{Step 2.} In this step, we take the feedback control $q_{t}^{*}$ defined
by (\ref{q*}) and the stopping time $\tau^{*}$ defined by (\ref{optimal stopping rule}),
then condition (\romannumeral4) implies that the inequality in (\ref{VT proof 1})
becomes equality. The definition (\ref{optimal stopping rule}) of $\tau^{*}$ makes
the inequality in (\ref{VT proof 2}) become equality and so does the inequality
in (\ref{VT proof 6}). Then, (\ref{VT proof 3}) also turns out to be an equality,
and $q_{t}^{*}$ defined by (\ref{q*}) indeed achieves the infimum of $\inf_{q_{t}\in\mathcal{U}}$
in (\ref{VT proof 3}). In particular,
\begin{equation}\label{VT proof 5}
%\left\{
\begin{aligned}
\widetilde{J}(x,i;\tau^{*})=v(x,i).
\end{aligned}
%\right.
\end{equation}
It follows from (\ref{VT proof 4}) and (\ref{VT proof 5}) that $V(x,i)=\widetilde{J}(x,i;\tau^{*})=v(x,i)$,
i.e., $\tau^{*}$ defined by (\ref{optimal stopping rule}) is a robust optimal stopping time
and $v(x,i)=V(x,i)$ is the value function.
~\hfill $\Box$

\section{Two-time-scale case}\label{Section TTS}

In this section, we suppose that the state space of a Markov chain $\alpha^{\varepsilon}_{t}$,
where $\varepsilon$ is the so-called \emph{time-scale parameter}, is divided into a number of
groups such that the chain fluctuates rapidly within a group, but jumps occasionally from one
group to another. Let the generator $Q^{\varepsilon}\doteq(\lambda^{\varepsilon}_{ij})_{i,j\in\mathcal{M}}$
for $\alpha^{\varepsilon}_{t}$ have the following two-time-scale structure:
\begin{equation*}
\begin{aligned}
Q^{\varepsilon}
=\frac{1}{\varepsilon}\widetilde{Q}+\widehat{Q}
\doteq\frac{1}{\varepsilon}
{\rm diag}\{\widetilde{Q}^{1},\ldots,\widetilde{Q}^{L}\}+\widehat{Q},
\end{aligned}
\end{equation*}
where each $\widetilde{Q}^{k}$ is an irreducible generator with dimension $m_{k}\times m_{k}$
and $m_{1}+\cdots+m_{L}=m$. In this case, we can write the state space of $\alpha^{\varepsilon}_{t}$
as $\mathcal{M}=\mathcal{M}_{1}\cup\cdots\cup\mathcal{M}_{L}
=\{s_{11},\ldots,s_{1m_{1}}\}\cup\cdots\cup\{s_{L1},\ldots,s_{Lm_{L}}\}$,
where, for $k=1,\ldots,L$, $\mathcal{M}_{k}\doteq\{s_{k1},\ldots,s_{km_{k}}\}$ is the subspace
corresponding to $\widetilde{Q}^{k}$. Let $\nu^{k}=(\nu^{k}_{1},\ldots,\nu^{k}_{m_{k}})$ denote
the \emph{stationary distribution} of $\widetilde{Q}^{k}$.
Note that in $Q^{\varepsilon}$, $\widetilde{Q}\doteq(\widetilde{\lambda}_{ij})_{i,j\in\mathcal{M}}$
governs the rapidly changing part and $\widehat{Q}\doteq(\widehat{\lambda}_{ij})_{i,j\in\mathcal{M}}$
describes the slowly varying components.

Define a process $\overline{\alpha}^{\varepsilon}_{t}$ by $\overline{\alpha}^{\varepsilon}_{t}=k$
if $\alpha^{\varepsilon}_{t}\in \mathcal{M}_{k}$. The process $\overline{\alpha}^{\varepsilon}_{t}$
is not necessarily a Markov chain. However, using a certain probabilistic argument, it is shown in
Yin and Zhang \cite[Chapter 4]{YinZhang2013} that $\overline{\alpha}^{\varepsilon}_{t}$ converges
in distribution to a \emph{limit Markov chain} $\overline{\alpha}_{t}$ with state space $\{1,\ldots,L\}$,
whose generator $\overline{Q}\doteq(\overline{\lambda}_{kp})_{k,p\in\{1,\ldots,L\}}$ is given by
\begin{equation}\label{limit generator}
%\left\{
\begin{aligned}
\overline{Q}
={\rm diag}\{\nu^{1},\ldots,\nu^{L}\}\widehat{Q}{\rm diag}\{1_{m_{1}},\ldots,1_{m_{L}}\},
\end{aligned}
%\right.
\end{equation}
where $1_{m_{k}}\doteq (1,\ldots,1)^{\top}\in \mathbb{R}^{m_{k}}$. The rationale of the above procedure
is that, when $\varepsilon$ is small, one may ignore some details of the Markov chain $\alpha^{\varepsilon}_{t}$
and obtain a limit problem in which $\alpha^{\varepsilon}_{t}$ is \emph{aggregated} so that the states
in $\mathcal{M}_{k}$ can be regarded as a single ``superstate". The structure of the limit problem is
much simpler than the original one, and the solution of the limit problem converges to that of the original problem.

In this section, we also take $\Theta(q)$ to be the relative entropy form $\Theta(q)=q^{2}/2\theta$,
as the uniqueness of viscosity solution is still needed, and we further assume:

(A4) $g(x,i)\equiv g(x)$, $i\in\mathcal{M}$.

For $k=1,\ldots,L$, we define a set of \emph{averaged coefficients} weighted by
the stationary distributions ($\varphi\doteq b,f$):
\begin{equation}\label{averaged coefficients}
%\left\{
\begin{aligned}
\overline{\varphi}(x,k)=\sum_{r=1}^{m_{k}}\nu^{k}_{r}\varphi(x,s_{kr}),\quad
\overline{\sigma}^{2}(x,k)=\sum_{r=1}^{m_{k}}\nu^{k}_{r}\sigma^{2}(x,s_{kr}).
\end{aligned}
%\right.
\end{equation}
Now, we consider a \emph{limit optimal stopping problem} with the averaged coefficients
$\overline{b}$, $\overline{\sigma}$, $\overline{f}$ given by (\ref{averaged coefficients})
and the limit Markov chain $\overline{\alpha}_{t}$ defined by (\ref{limit generator}).
Denote $V^{\varepsilon}(x,i)$ and $V^{0}(x,k)$ the value functions of the original problem
and limit problem, respectively. Then, $V^{\varepsilon}(x,i)$ is the unique viscosity solution
to the HJB equation (\ref{uniqueness HJB}), and $V^{0}(x,k)$ is the unique viscosity solution
to the following \emph{limit HJB equation} (the arguments $(x,k)$ are omitted):
\begin{equation}\label{limit HJB}
%\left\{
\begin{aligned}
\min\bigg\{rv-\overline{b}v^{\prime}+\frac{\theta}{2}\overline{\sigma}^{2}[v^{\prime}]^{2}
-\frac{1}{2}\overline{\sigma}^{2}v^{\prime\prime}
-\overline{Q}v(x,\cdot)(k)-\overline{f},
v-g\bigg\}=0.
\end{aligned}
%\right.
\end{equation}
Note that the number of equations for $V^{\varepsilon}$ is $m=m_{1}+\cdots+m_{L}$, while that
for $V^{0}$ is only $L$. From the Lipschitz property of the value function and
the Arzel\`{a}-Ascoli theorem, on any compact subset of $\mathbb{R}$, for each subsequence
of $\{\varepsilon\rightarrow0\}$, there exists a further subsequence such that $V^{\varepsilon}$
converges to a limit function $\overline{V}$ on that set. The following lemma indicates that,
$\overline{V}$ depends only on $k$ whenever $\alpha^{\varepsilon}_{t}\in \mathcal{M}_{k}$
for $k=1,\ldots,L$.
\begin{lemma}\label{Independent}
Let (A1)-(A4) hold. Then, $\overline{V}(x,s_{kl})$ is independent of $l$,
where $s_{kl}\in\mathcal{M}_{k}=\{s_{k1},\ldots,s_{km_{k}}\}$.
\end{lemma}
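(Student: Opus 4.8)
The plan is to exploit the singular scaling of the generator $Q^{\varepsilon}$, specifically the dominant $\frac{1}{\varepsilon}\widetilde{Q}$ term, using the viscosity solution machinery with the \emph{strict local maximum} formulation of Definition~\ref{VS Definition 1}. The key observation is that in the HJB equation (\ref{uniqueness HJB}) for $V^{\varepsilon}$, the coupling term is $Q^{\varepsilon}V^{\varepsilon}(x,\cdot)(i)=\frac{1}{\varepsilon}\widetilde{Q}V^{\varepsilon}(x,\cdot)(i)+\widehat{Q}V^{\varepsilon}(x,\cdot)(i)$. Multiplying the equation through by $\varepsilon$ and letting $\varepsilon\to 0$, every term except the $\widetilde{Q}$-coupling is $O(1)$ and hence vanishes in the limit, forcing $\widetilde{Q}\overline{V}(x,\cdot)(i)=0$ in an appropriate viscosity sense. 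Since $\widetilde{Q}=\mathrm{diag}\{\widetilde{Q}^{1},\ldots,\widetilde{Q}^{L}\}$ is block-diagonal with each $\widetilde{Q}^{k}$ irreducible, the kernel of $\widetilde{Q}^{k}$ consists exactly of vectors that are constant across the states $\{s_{k1},\ldots,s_{km_{k}}\}$ within block $k$. This is precisely the assertion that $\overline{V}(x,s_{kl})$ is independent of $l$.

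To make this rigorous I would argue by contradiction. Suppose that for some fixed $x_{0}$ and some block $k$, $\overline{V}(x_{0},s_{kl})$ is \emph{not} constant in $l$. Then the vector $(\overline{V}(x_{0},s_{k1}),\ldots,\overline{V}(x_{0},s_{km_{k}}))$ is not in the kernel of $\widetilde{Q}^{k}$, so $\widetilde{Q}^{k}\overline{V}(x_{0},\cdot)$ is a nonzero vector; in particular there is an index $l^{*}$ at which $\widetilde{Q}\overline{V}(x_{0},\cdot)(s_{kl^{*}})\neq 0$. I would fix attention on the state $i^{*}=s_{kl^{*}}$ and build, on a neighborhood of $x_{0}$, a $C^{2}$ test function $\varphi$ such that $\overline{V}(\cdot,i^{*})-\varphi$ has a strict local maximum (or minimum, chosen according to the sign of $\widetilde{Q}\overline{V}(x_{0},\cdot)(i^{*})$) at $x_{0}$. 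The standard stability argument for viscosity solutions then produces, for each $\varepsilon$, a nearby point $x_{\varepsilon}\to x_{0}$ where $V^{\varepsilon}(\cdot,i^{*})-\varphi$ has a local extremum, and the viscosity sub/supersolution inequality for $V^{\varepsilon}$ at $(x_{\varepsilon},i^{*})$ holds with $\varphi',\varphi''$ replacing the derivatives of $V^{\varepsilon}$.

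Writing that inequality out, multiplying by $\varepsilon$, and sending $\varepsilon\to 0$ isolates the $\widetilde{Q}$-term: all the $O(1)$ contributions ($rV^{\varepsilon}$, $-\overline{b}\varphi'$, the quadratic gradient term, $-\tfrac{1}{2}\overline{\sigma}^{2}\varphi''$, $-\widehat{Q}V^{\varepsilon}$, $-\overline{f}$, and the obstacle term $V^{\varepsilon}-g$) are multiplied by $\varepsilon$ and drop out, leaving a one-sided inequality of the form $\widetilde{Q}\overline{V}(x_{0},\cdot)(i^{*})\leq 0$ (from the subsolution side) or $\geq 0$ (from the supersolution side), using the convergence $V^{\varepsilon}\to\overline{V}$ uniformly on compacts together with continuity. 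Because $\overline{V}$ is both a viscosity sub- and supersolution limit, one obtains $\widetilde{Q}\overline{V}(x_{0},\cdot)(i^{*})=0$, contradicting its non-vanishing. The main obstacle will be the bookkeeping in the extremum-perturbation step: I must ensure the limit of $V^{\varepsilon}(x_{\varepsilon},j)$ over \emph{all} neighbors $j$ (not just $i^{*}$) is controlled so that the full coupling sum $\sum_{j}\widetilde{\lambda}_{i^{*}j}V^{\varepsilon}(x_{\varepsilon},j)$ converges to $\widetilde{Q}\overline{V}(x_{0},\cdot)(i^{*})$. This is where the uniform (Arzel\`{a}–Ascoli) convergence of $V^{\varepsilon}$ and the finiteness of $\mathcal{M}$ are essential, and where the choice of a \emph{strict} local extremum—rather than merely a global one—pays off by guaranteeing the perturbed maximizers $x_{\varepsilon}$ actually converge to $x_{0}$.
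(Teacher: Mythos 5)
Your overall skeleton---multiply the viscosity inequality for $V^{\varepsilon}$ by $\varepsilon$, pass to the limit to isolate the fast generator $\widetilde{Q}$, and then invoke irreducibility of $\widetilde{Q}^{k}$---is the same as the paper's, but your execution has a genuine gap in the subsolution half, and that half is precisely what the paper arranges never to need. Equation (\ref{uniqueness HJB}) is a variational inequality: the subsolution property asserts $\min\{H^{*}V^{\varepsilon}(x,i),\,V^{\varepsilon}(x,i)-g(x)\}\leq 0$, i.e., only that \emph{one} of the two branches is nonpositive. Your proposal treats the obstacle term $V^{\varepsilon}-g$ as just another additive $O(1)$ term that ``drops out'' after multiplying by $\varepsilon$, but the min is not additive: at any point where the obstacle binds in the limit ($\overline{V}(x_{0},i^{*})=g(x_{0})$, which certainly occurs in the stopping region of state $i^{*}$), the subsolution inequality may be satisfied entirely by the obstacle branch and carries \emph{no} information about the PDE branch, so you cannot extract the reverse inequality on $\widetilde{Q}\overline{V}(x_{0},\cdot)(i^{*})$ there. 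This case is not addressed in your proposal, and it cannot be waved away (one cannot argue that the obstacle binding forces constancy either, since it may bind at $i^{*}$ but not at the other states of the block). Two smaller problems: your signs are reversed (with the coupling entering as $-Q^{\varepsilon}v$, the supersolution side yields $\widetilde{Q}\overline{V}\leq 0$ and the subsolution side, where available, yields $\geq 0$); and your contradiction scheme needs both a sub- and a super-test function at the \emph{same} prescribed point $x_{0}$, which for a merely continuous (Lipschitz) $\overline{V}$ is not guaranteed---neither $J^{2,+}$ nor $J^{2,-}$ need be nonempty at a given point, so an additional density-plus-continuity step would be required.

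The paper's proof sidesteps all of this by using \emph{only} the supersolution side, which for a variational inequality holds unconditionally (the min of two quantities is $\geq 0$ iff both are). Working with $(\eta,\Lambda)\in J^{2,-}\overline{V}(x,s_{kl})$ and the stability result of Crandall--Ishii--Lions \cite[Lemma 6.1]{CIL1992}, it obtains the one-sided componentwise inequality $\sum_{r\neq l}\widetilde{\lambda}_{s_{kl},s_{kr}}[\overline{V}(x,s_{kr})-\overline{V}(x,s_{kl})]\leq 0$ for every $l$, and then concludes constancy from the maximum-principle lemma for irreducible generators \cite[Lemma A.39]{YinZhang2013}: if $\widetilde{Q}^{k}w\leq 0$ componentwise, then at an argmin $l^{*}$ of $w$ one has $(\widetilde{Q}^{k}w)(l^{*})\geq 0$, forcing equality of $w$ along all transitions out of $l^{*}$, and irreducibility propagates this to the whole block. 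If you wish to keep your ``kernel of $\widetilde{Q}^{k}$'' formulation, the one-sided inequality already suffices: since $\nu^{k}\widetilde{Q}^{k}=0$ with $\nu^{k}_{l}>0$ for all $l$, the vector $u=\widetilde{Q}^{k}\overline{V}(x,\cdot)$ satisfies $\sum_{l}\nu^{k}_{l}u_{l}=0$, so $u\leq 0$ componentwise forces $u=0$, and then $\overline{V}(x,\cdot)\in\ker\widetilde{Q}^{k}=\mathrm{span}\{1_{m_{k}}\}$. Either way, the subsolution half of your argument should be deleted rather than repaired.
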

\begin{proof}
For $(\eta,\Lambda)\in J^{2,-}\overline{V}(x,s_{kl})$, from Crandall et al. \cite[Lemma 6.1]{CIL1992},
there exist $\varepsilon_{n}\rightarrow0$, $x_{n}\rightarrow x$,
and $(\eta_{n},\Lambda_{n})\in J^{2,-}V^{\varepsilon_{n}}(x_{n},s_{kl})$
such that $(V^{\varepsilon_{n}}(x_{n},s_{kl}),\eta_{n},\Lambda_{n})
\rightarrow(\overline{V}(x,s_{kl}),\eta,\Lambda)$, as $n\rightarrow\infty$.
Further, the definition of viscosity supersolution yields
\begin{equation*}
\begin{aligned}
&rV^{\varepsilon_{n}}(x_{n},s_{kl})-b(x_{n},s_{kl})\eta_{n}+\frac{\theta}{2}\sigma^{2}(x_{n},s_{kl})\eta_{n}^{2}
-\frac{1}{2}\sigma^{2}(x_{n},s_{kl})\Lambda_{n}\\
&-\sum_{j\neq s_{kl}}\lambda^{\varepsilon_{n}}_{s_{kl},j}
[V^{\varepsilon_{n}}(x_{n},j)-V^{\varepsilon_{n}}(x_{n},s_{kl})]-f(x_{n},s_{kl})\geq0.
\end{aligned}
\end{equation*}
Recall that
$\lambda^{\varepsilon_{n}}_{s_{kl},j}
=\frac{1}{\varepsilon_{n}}\widetilde{\lambda}_{s_{kl},j}+\widehat{\lambda}_{s_{kl},j}$
and $\widetilde{\lambda}_{s_{kl},j}=0$ when $j\notin \mathcal{M}_{k}$.
Multiplying both sides of the above inequality by $\varepsilon_{n}$
and letting $n\rightarrow\infty$, it follows that
$
\sum_{r\neq l}\widetilde{\lambda}_{s_{kl},s_{kr}}
[\overline{V}(x,s_{kr})-\overline{V}(x,s_{kl})]\leq0,
$
or,
\begin{equation*}
\begin{aligned}
\overline{V}(x,s_{kl})\geq\sum_{r\neq l}
\bigg(-\frac{\widetilde{\lambda}_{s_{kl},s_{kr}}}{\widetilde{\lambda}_{s_{kl},s_{kl}}}\bigg)
\overline{V}(x,s_{kr}).
\end{aligned}
\end{equation*}
In view of the irreducibility of $\widetilde{Q}^{k}$ and Yin and Zhang \cite[Lemma A.39]{YinZhang2013},
we obtain $\overline{V}(x,s_{kl})=\overline{V}(x,s_{kr})$ for any $s_{kl},s_{kr}\in \mathcal{M}_{k}$.
\end{proof}
The next theorem shows that the limit function $\overline{V}(x,k)$ is also a viscosity
solution to the limit HJB equation (\ref{limit HJB}). From the uniqueness of viscosity
solution, we have $\overline{V}(x,k)=V^{0}(x,k)$ for $k=1,\ldots,L$. It is mentioned
that the ``strict local maximum (resp., minimum)" will be taken in Definition \ref{VS Definition 1}
to prove the viscosity solution property of $\overline{V}(x,k)$.
\begin{theorem}\label{Convergence theorem}
Let (A1)-(A4) hold. Then, $\overline{V}(x,k)$ is a viscosity solution
to the limit HJB equation (\ref{limit HJB}).
\end{theorem}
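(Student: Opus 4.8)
The plan is to show that any locally uniform subsequential limit $\overline{V}$ is at once a viscosity subsolution and supersolution of \eqref{limit HJB}, via the \emph{perturbed test function} (corrector) method for singular perturbations. Lemma \ref{Independent} has already collapsed each fast block $\mathcal{M}_k$ to a single value $\overline{V}(x,k)$, and along the chosen subsequence $V^{\varepsilon}(\cdot,i)\to\overline{V}(\cdot,\mathrm{group}(i))$ uniformly on compacts for every $i\in\mathcal{M}$. As flagged before the theorem, I would use Definition \ref{VS Definition 1} with \emph{strict} local extrema, so that the perturbed extremizers are pinned down near the contact point.

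For the \emph{supersolution} property, fix $k$ and $\varphi\in C^{2}$ with $\overline{V}(\cdot,k)-\varphi$ having a strict local minimum at $\overline{x}$, normalized to $\overline{V}(\overline{x},k)=\varphi(\overline{x})$. The branch $\overline{V}(\overline{x},k)-g(\overline{x})\ge0$ is inherited in the limit from $V^{\varepsilon}\ge g$, so only the continuation branch needs work. Writing $\Phi_l(\overline{x})$ for the first entry of the minimum in \eqref{uniqueness HJB} at state $s_{kl}$, evaluated with $v=\overline{V}(\overline{x},k)$, $v'=\varphi'(\overline{x})$, $v''=\varphi''(\overline{x})$, and with the full generator $Q$ replaced by its slow block $\widehat{Q}$, and setting $\overline{\Phi}(\overline{x})=\sum_{r}\nu^{k}_{r}\Phi_r(\overline{x})$, I would first solve the cell problem
\[
\widetilde{Q}^{k}\xi(l)\doteq\sum_{r=1}^{m_{k}}\widetilde{\lambda}_{s_{kl},s_{kr}}\,\xi_r=\Phi_l(\overline{x})-\overline{\Phi}(\overline{x}),\qquad l=1,\dots,m_{k}.
\]
Its solvability is exactly the Fredholm condition that the right-hand side be $\nu^{k}$-mean-zero, which holds by construction since $\nu^{k}\widetilde{Q}^{k}=0$; the irreducibility of $\widetilde{Q}^{k}$ then furnishes a solution $\xi$ (Yin and Zhang \cite[Lemma A.39]{YinZhang2013}), and by \eqref{limit generator}--\eqref{averaged coefficients} the average $\overline{\Phi}(\overline{x})$ is precisely the continuation part of \eqref{limit HJB} at $(\overline{x},k)$. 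Next I would minimize $(x,l)\mapsto V^{\varepsilon}(x,s_{kl})-\varphi(x)-\varepsilon\xi_l$ over $\overline{B}_{\delta}(\overline{x})\times\{1,\dots,m_{k}\}$; strictness and uniform convergence force an interior minimizer $(x^{\varepsilon},l^{\varepsilon})$ with $x^{\varepsilon}\to\overline{x}$. Minimality in $l$ gives $V^{\varepsilon}(x^{\varepsilon},s_{kr})-V^{\varepsilon}(x^{\varepsilon},s_{kl^{\varepsilon}})\ge\varepsilon(\xi_r-\xi_{l^{\varepsilon}})$, whence, using $\widetilde{\lambda}_{s_{kl^{\varepsilon}},s_{kr}}\ge0$ for $r\neq l^{\varepsilon}$,
\[
\frac{1}{\varepsilon}\,\widetilde{Q}V^{\varepsilon}(x^{\varepsilon},\cdot)(s_{kl^{\varepsilon}})\ge\widetilde{Q}^{k}\xi(l^{\varepsilon}).
\]
Applying the supersolution inequality for $V^{\varepsilon}$ at $(x^{\varepsilon},s_{kl^{\varepsilon}})$ with test function $\varphi+\varepsilon\xi_{l^{\varepsilon}}$ (whose $x$-derivatives are $\varphi',\varphi''$) and substituting this bound for the singular term leaves the continuation expression at $(x^{\varepsilon},s_{kl^{\varepsilon}})$ minus $\widetilde{Q}^{k}\xi(l^{\varepsilon})$ nonnegative. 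Passing to a further subsequence with $l^{\varepsilon}\equiv l^{*}$, the continuation expression tends to $\Phi_{l^{*}}(\overline{x})$, and the cell equation collapses the inequality to $\overline{\Phi}(\overline{x})\ge0$.

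The \emph{subsolution} property is the mirror image: take $\overline{V}(\cdot,k)-\varphi$ with a strict local maximum at $\overline{x}$ and maximize $V^{\varepsilon}(x,s_{kl})-\varphi(x)-\varepsilon\xi_l$ with the same corrector. If $\overline{V}(\overline{x},k)=g(\overline{x})$ the stopping branch is already $\le0$; otherwise $V^{\varepsilon}(x^{\varepsilon},s_{kl^{\varepsilon}})>g(x^{\varepsilon})$ for small $\varepsilon$, so the subsolution inequality for $V^{\varepsilon}$ forces its continuation expression to be $\le0$. Maximality in $l$ reverses the monotonicity bound to $\frac{1}{\varepsilon}\widetilde{Q}V^{\varepsilon}(x^{\varepsilon},\cdot)(s_{kl^{\varepsilon}})\le\widetilde{Q}^{k}\xi(l^{\varepsilon})$, and the identical passage to the limit yields $\overline{\Phi}(\overline{x})\le0$. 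Having shown $\overline{V}$ solves \eqref{limit HJB}, uniqueness (argued as in Theorem \ref{uniqueness theorem}) identifies $\overline{V}=V^{0}$, so in fact the whole family converges.

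I expect the genuine difficulty to be the singular term $\frac{1}{\varepsilon}\widetilde{Q}V^{\varepsilon}$, which has no pointwise limit and cannot be read off a test function, since the viscosity inequalities only control $V^{\varepsilon}$ in the slow variable. The two-part remedy is the heart of the proof: the corrector $\xi$ is chosen so that its $\nu^{k}$-average annihilates the block-dependent fluctuations (Fredholm alternative, available precisely because $\widetilde{Q}^{k}$ is irreducible), while the sign of the off-diagonal intensities at the fast-variable extremizer converts the otherwise divergent $\frac{1}{\varepsilon}\widetilde{Q}V^{\varepsilon}$ into the bounded quantity $\widetilde{Q}^{k}\xi$. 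A secondary point, where the strictness in Definition \ref{VS Definition 1} and the Arzel\`{a}--Ascoli-based uniform convergence enter, is ensuring that the perturbed extremizers stay interior and converge to $\overline{x}$.
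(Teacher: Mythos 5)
Your proposal is correct, but it follows a genuinely different route from the paper. The paper avoids correctors and cell problems altogether: for each fast state $s_{kl}$ it picks a \emph{separate} minimizer $x^{\varepsilon}_{l}$ of $V^{\varepsilon}(\cdot,s_{kl})-\varphi$ in $B_{\delta}(x_{0})$, writes down all $m_{k}$ viscosity inequalities, and takes their $\nu^{k}$-weighted sum. The singular term is then killed by a cross-comparison between the different minimizers, namely $V^{\varepsilon}(x^{\varepsilon}_{r},s_{kr})-\varphi(x^{\varepsilon}_{r})\leq V^{\varepsilon}(x^{\varepsilon}_{l},s_{kr})-\varphi(x^{\varepsilon}_{l})$, combined with the nonnegativity of the off-diagonal rates and the stationarity identity $\sum_{l}\nu^{k}_{l}\widetilde{\lambda}_{s_{kl},s_{kr}}=0$, which shows the averaged $\frac{1}{\varepsilon}\widetilde{Q}$-term has a favorable sign and can simply be discarded before passing to the limit. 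You instead run the Evans-type perturbed test function method: a single joint minimizer $(x^{\varepsilon},l^{\varepsilon})$ over $\overline{B}_{\delta}(\overline{x})\times\{1,\ldots,m_{k}\}$, one viscosity inequality at the extremal index, and a corrector $\xi$ solving the cell problem $\widetilde{Q}^{k}\xi=\Phi_{\cdot}(\overline{x})-\overline{\Phi}(\overline{x})$ (solvable by the Fredholm alternative, since the right-hand side is $\nu^{k}$-mean-zero and $\widetilde{Q}^{k}$ is irreducible), so that minimality in $l$ converts the divergent term into the bounded quantity $\widetilde{Q}^{k}\xi(l^{\varepsilon})$ and the cell equation cancels the state-dependent fluctuation in the limit. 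Both arguments rest on the same two pillars (irreducibility of $\widetilde{Q}^{k}$ and the sign structure of the generator), and both handle the obstacle branch identically. What the paper's averaging buys is economy: no cell problem, no corrector, and the limit identity \eqref{averaged coefficients}--\eqref{limit generator} appears automatically from the weights $\nu^{k}_{l}$. What your method buys is modularity and generality: only one viscosity inequality is invoked, the mechanism does not depend on having an explicit averaging identity, and it extends to settings (e.g., fast diffusive components) where the paper's finite-sum bookkeeping is unavailable. One cosmetic point: the Fredholm alternative you need is standard for irreducible generators but is not quite the content of Yin and Zhang's Lemma A.39 cited in Lemma \ref{Independent}; you should cite the appropriate solvability result (e.g., from Yin and Zhang's Chapter 2 on the structure of $\widetilde{Q}^{k}$) rather than reuse that reference.
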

\begin{proof}
\emph{Viscosity supersolution property}.
Fix $k=1,\ldots,L$. For any $s_{kl}\in\mathcal{M}_{k}$, let $\overline{V}(x,k)$ be a limit
of $V^{\varepsilon}(x,s_{kl})$ for some subsequence of $\varepsilon$. Take a function
$\varphi(x)\in C^{2}$ such that $\overline{V}(x,k)-\varphi(x)$ has a \emph{strict local minimum}
at $x_{0}$ in a neighborhood $B_{\delta}(x_{0})$. Choose $x^{\varepsilon}_{l}$ such that,
for each $s_{kl}\in\mathcal{M}_{k}$, $V^{\varepsilon}(x,s_{kl})-\varphi(x)$ attains its
\emph{local minimum} at $x^{\varepsilon}_{l}$ in $B_{\delta}(x_{0})$. Then we must have
$x^{\varepsilon}_{l}\rightarrow x_{0}$ as $\varepsilon\rightarrow0$. Moreover, the viscosity
supersolution property of $V^{\varepsilon}$ implies
\begin{equation}\label{Convergence proof 1}
\begin{aligned}
&\sum_{l=1}^{m_{k}}\nu^{k}_{l}\bigg(rV^{\varepsilon}(x^{\varepsilon}_{l},s_{kl})
-b(x^{\varepsilon}_{l},s_{kl})\varphi^{\prime}(x^{\varepsilon}_{l})
+\frac{\theta}{2}\sigma^{2}(x^{\varepsilon}_{l},s_{kl})[\varphi^{\prime}(x^{\varepsilon}_{l})]^{2}
-\frac{1}{2}\sigma^{2}(x^{\varepsilon}_{l},s_{kl})\varphi^{\prime\prime}(x^{\varepsilon}_{l})\\
&-\sum_{j\neq s_{kl}}\lambda_{s_{kl},j}^{\varepsilon}
[V^{\varepsilon}(x^{\varepsilon}_{l},j)-V^{\varepsilon}(x^{\varepsilon}_{l},s_{kl})]
-f(x^{\varepsilon}_{l},s_{kl})\bigg)\geq0,
\end{aligned}
\end{equation}
in which
\begin{equation}\label{Convergence proof 1-1}
\begin{aligned}
&\sum_{l=1}^{m_{k}}\nu^{k}_{l}\sum_{j\neq s_{kl}}\lambda_{s_{kl},j}^{\varepsilon}
[V^{\varepsilon}(x^{\varepsilon}_{l},j)-V^{\varepsilon}(x^{\varepsilon}_{l},s_{kl})]\\
=&\sum_{l=1}^{m_{k}}\nu^{k}_{l}
\bigg(\sum_{r\neq l}\frac{1}{\varepsilon}\widetilde{\lambda}_{s_{kl},s_{kr}}
[V^{\varepsilon}(x^{\varepsilon}_{l},s_{kr})-V^{\varepsilon}(x^{\varepsilon}_{l},s_{kl})]
+\sum_{j\neq s_{kl}}\widehat{\lambda}_{s_{kl},j}
[V^{\varepsilon}(x^{\varepsilon}_{l},j)-V^{\varepsilon}(x^{\varepsilon}_{l},s_{kl})]\bigg).
\end{aligned}
\end{equation}
By noting that
$
V^{\varepsilon}(x^{\varepsilon}_{r},s_{kr})-\varphi(x^{\varepsilon}_{r})
\leq V^{\varepsilon}(x^{\varepsilon}_{l},s_{kr})-\varphi(x^{\varepsilon}_{l}),
$
we have
\begin{equation*}
\begin{aligned}
&\sum_{l=1}^{m_{k}}\nu^{k}_{l}\sum_{r\neq l}\widetilde{\lambda}_{s_{kl},s_{kr}}
[V^{\varepsilon}(x^{\varepsilon}_{l},s_{kr})-V^{\varepsilon}(x^{\varepsilon}_{l},s_{kl})]\\
\geq&\sum_{l=1}^{m_{k}}\nu^{k}_{l}\sum_{r\neq l}\widetilde{\lambda}_{s_{kl},s_{kr}}
[(V^{\varepsilon}(x^{\varepsilon}_{r},s_{kr})-\varphi(x^{\varepsilon}_{r}))
-(V^{\varepsilon}(x^{\varepsilon}_{l},s_{kl})-\varphi(x^{\varepsilon}_{l}))].
\end{aligned}
\end{equation*}
Recall that $\sum_{r=1}^{m_{k}}\widetilde{\lambda}_{s_{kl},s_{kr}}=0$, i.e.,
$\sum_{r\neq l}\widetilde{\lambda}_{s_{kl},s_{kr}}=-\widetilde{\lambda}_{s_{kl},s_{kl}}$,
so the right-hand side of the above inequality is equal to
\begin{equation*}
\begin{aligned}
\sum_{l=1}^{m_{k}}\nu^{k}_{l}\sum_{r=1}^{m_{k}}\widetilde{\lambda}_{s_{kl},s_{kr}}
[V^{\varepsilon}(x^{\varepsilon}_{r},s_{kr})-\varphi(x^{\varepsilon}_{r})]
=\sum_{r=1}^{m_{k}}[V^{\varepsilon}(x^{\varepsilon}_{r},s_{kr})-\varphi(x^{\varepsilon}_{r})]
\sum_{l=1}^{m_{k}}\nu^{k}_{l}\widetilde{\lambda}_{s_{kl},s_{kr}}=0,
\end{aligned}
\end{equation*}
where the last equality is owing to
$\sum_{l=1}^{m_{k}}\nu^{k}_{l}\widetilde{\lambda}_{s_{kl},s_{kr}}=0$
based on the definition of stationary distribution.
Therefore,
\begin{equation}\label{Convergence proof 1-2}
\begin{aligned}
&\sum_{l=1}^{m_{k}}\nu^{k}_{l}
\sum_{r\neq l}\widetilde{\lambda}_{s_{kl},s_{kr}}
[V^{\varepsilon}(x^{\varepsilon}_{l},s_{kr})-V^{\varepsilon}(x^{\varepsilon}_{l},s_{kl})]\geq0.
\end{aligned}
\end{equation}
It follows from (\ref{Convergence proof 1}), (\ref{Convergence proof 1-1}),
and (\ref{Convergence proof 1-2}) that
\begin{equation*}
\begin{aligned}
&\sum_{l=1}^{m_{k}}\nu^{k}_{l}\bigg(r \overline{V}(x_{0},s_{kl})
-b(x_{0},s_{kl})\varphi^{\prime}(x_{0})
+\frac{\theta}{2}\sigma^{2}(x_{0},s_{kl})[\varphi^{\prime}(x_{0})]^{2}
-\frac{1}{2}\sigma^{2}(x_{0},s_{kl})\varphi^{\prime\prime}(x_{0})\\
&-\sum_{j\neq s_{kl}}\widehat{\lambda}_{s_{kl},j}[\overline{V}(x_{0},j)-\overline{V}(x_{0},s_{kl})]
-f(x_{0},s_{kl})\bigg)\\
\geq&\lim_{\varepsilon\rightarrow0}\sum_{l=1}^{m_{k}}\nu^{k}_{l}\bigg(rV^{\varepsilon}(x^{\varepsilon}_{l},s_{kl})
-b(x^{\varepsilon}_{l},s_{kl})\varphi^{\prime}(x^{\varepsilon}_{l})
+\frac{\theta}{2}\sigma^{2}(x^{\varepsilon}_{l},s_{kl})[\varphi^{\prime}(x^{\varepsilon}_{l})]^{2}
-\frac{1}{2}\sigma^{2}(x^{\varepsilon}_{l},s_{kl})\varphi^{\prime\prime}(x^{\varepsilon}_{l})\\
&-\sum_{j\neq s_{kl}}\lambda_{s_{kl},j}^{\varepsilon}
[V^{\varepsilon}(x^{\varepsilon}_{l},j)-V^{\varepsilon}(x^{\varepsilon}_{l},s_{kl})]
-f(x^{\varepsilon}_{l},s_{kl})\bigg)
\geq0.
\end{aligned}
\end{equation*}
By the definition (\ref{limit generator}) of $\overline{Q}$, we have
\begin{equation*}
\begin{aligned}
\sum_{l=1}^{m_{k}}\nu^{k}_{l}\sum_{j\neq s_{kl}}\widehat{\lambda}_{s_{kl},j}
[\overline{V}(x_{0},j)-\overline{V}(x_{0},s_{kl})]
=&\sum_{p\neq k}\overline{\lambda}_{kp}[\overline{V}(x_{0},p)-\overline{V}(x_{0},k)],
\end{aligned}
\end{equation*}
and further, by the definition (\ref{averaged coefficients}) of averaged coefficients, we obtain
\begin{equation*}
\begin{aligned}
&r\overline{V}(x_{0},k)-\overline{b}(x_{0},k)\varphi^{\prime}(x_{0})
+\frac{\theta}{2}\overline{\sigma}^{2}(x_{0},k)[\varphi^{\prime}(x_{0})]^{2}
-\frac{1}{2}\overline{\sigma}^{2}(x_{0},k)\varphi^{\prime\prime}(x_{0})\\
&-\sum_{p\neq k}\overline{\lambda}_{kp}[\overline{V}(x_{0},p)-\overline{V}(x_{0},k)]
-\overline{f}(x_{0},k)\\
=&\sum_{l=1}^{m_{k}}\nu^{k}_{l}\bigg(r \overline{V}(x_{0},s_{kl})
-b(x_{0},s_{kl})\varphi^{\prime}(x_{0})+\frac{\theta}{2}\sigma^{2}(x_{0},s_{kl})[\varphi^{\prime}(x_{0})]^{2}
-\frac{1}{2}\sigma^{2}(x_{0},s_{kl})\varphi^{\prime\prime}(x_{0})\\
&-\sum_{j\neq s_{kl}}\widehat{\lambda}_{s_{kl},j}[\overline{V}(x_{0},j)-\overline{V}(x_{0},s_{kl})]
-f(x_{0},s_{kl})\bigg)
\geq0.
\end{aligned}
\end{equation*}
On the other hand, $\overline{V}(x_{0},k)
=\lim_{\varepsilon\rightarrow 0}V^{\varepsilon}(x_{l}^{\varepsilon},s_{kl})
\geq\lim_{\varepsilon\rightarrow 0}g(x_{l}^{\varepsilon})=g(x_{0})$.
As a consequence, $\overline{V}(x,k)$ is a viscosity supersolution to (\ref{limit HJB}).

\emph{Viscosity subsolution property}.
Let $\varphi(x)\in C^{2}$ be such that $\overline{V}(x,k)-\varphi(x)$ attains its
\emph{strict local maximum} at $x_{0}$ in a neighborhood $B_{\delta}(x_{0})$.
Similarly, choose $x^{\varepsilon}_{l}$ such that, for each $s_{kl}\in\mathcal{M}_{k}$,
$V^{\varepsilon}(x,s_{kl})-\varphi(x)$ attains its \emph{local maximum} at $x^{\varepsilon}_{l}$
in $B_{\delta}(x_{0})$. So we have $x^{\varepsilon}_{l}\rightarrow x_{0}$ as $\varepsilon\rightarrow0$.
We first note that if $\overline{V}(x_{0},k)-g(x_{0})\leq0$, then the viscosity subsolution
property already holds. Otherwise, if $\overline{V}(x_{0},k)-g(x_{0})>0$, then for $\varepsilon$
small enough, we have $V^{\varepsilon}(x_{l}^{\varepsilon},s_{kl})-g(x_{l}^{\varepsilon})>0$.
It follows from the viscosity subsolution property of $V^{\varepsilon}$ that
\begin{equation*}
\begin{aligned}
&rV^{\varepsilon}(x^{\varepsilon}_{l},s_{kl})
-b(x^{\varepsilon}_{l},s_{kl})\varphi^{\prime}(x^{\varepsilon}_{l})
+\frac{\theta}{2}\sigma^{2}(x^{\varepsilon}_{l},s_{kl})[\varphi^{\prime}(x^{\varepsilon}_{l})]^{2}
-\frac{1}{2}\sigma^{2}(x^{\varepsilon}_{l},s_{kl})\varphi^{\prime\prime}(x^{\varepsilon}_{l})\\
&-\sum_{j\neq s_{kl}}\lambda_{s_{kl},j}^{\varepsilon}
[V^{\varepsilon}(x^{\varepsilon}_{l},j)-V^{\varepsilon}(x^{\varepsilon}_{l},s_{kl})]
-f(x^{\varepsilon}_{l},s_{kl})\leq0.
\end{aligned}
\end{equation*}
Then we can repeat the same argument as in the proof of viscosity supersolution property
to show that $\overline{V}(x,k)$ is a viscosity subsolution. Thus, $\overline{V}(x,k)$
is a viscosity solution to (\ref{limit HJB}). From uniqueness of viscosity solution,
we have $\overline{V}(x,k)=V^{0}(x,k)$, $k=1,\ldots,L$.
\end{proof}

\section{Stock selling}\label{Section SS}

In this section, we provide an example of finding the best time to sell a stock. This example will be
numerically solved by computing the optimal selling rule and value function. It turns out that the optimal
selling rule is of \emph{threshold-type}, and the threshold levels depend on the state of the Markov chain.
In addition, we will test the dependence of the solution on the ambiguity factor $\theta$
and the effectiveness of the two-time-scale approximation.

Let the price of the stock be described by a \emph{switching geometric Brownian motion}:
\begin{equation*}
\left\{
\begin{aligned}
dX_{t}=&b(\alpha_{t})X_{t}dt+\sigma(\alpha_{t})X_{t}dB^{\mathbb{P}}_{t},\quad t\geq0,\\
X_{0}=&x\in \mathbb{R},\quad \alpha_{0}=i\in \mathcal{M},
\end{aligned}
\right.
\end{equation*}
where $b(i)$ and $\sigma(i)$, $i\in \mathcal{M}$, are the appreciation rate and volatility rate
of the stock when the market is in state $i$, respectively.

Then, this equation can be rewritten as
\begin{equation*}
\left\{
\begin{aligned}
dX_{t}=&[b(\alpha_{t})X_{t}+\sigma(\alpha_{t})X_{t}q_{t}]dt
+\sigma(\alpha_{t})X_{t}dB^{\mathbb{Q}}_{t},\\
X_{0}=&x\in \mathbb{R},\quad \alpha_{0}=i\in \mathcal{M}.
\end{aligned}
\right.
\end{equation*}
The problem for the investor is to maximize the following reward by selecting
a best time to sell the stock, and the value function is accordingly defined by
\begin{equation*}
%\left\{
\begin{aligned}
V(x,i)=\sup_{\tau\in\mathcal{S}}\inf_{q_{t}\in\mathcal{U}}
\mathbb{E}^{\mathbb{Q}}\bigg[\int_{0}^{\tau}e^{-rt}\frac{q_{t}^{2}}{2\theta}dt
+e^{-r\tau}(X_{\tau}-K)\bigg],
\end{aligned}
%\right.
\end{equation*}
where $q_{t}^{2}/2\theta$ with $\theta>0$ is the relative entropy to measure the ambiguity
degree (the larger the $\theta$, the bigger the ambiguity degree), and the positive constant
$K$ is the so-called \emph{transaction fee} to sell a stock.

In this example, the HJB equation (\ref{HJB}) reduces to
\begin{equation}\label{numerical HJB}
%\left\{
\begin{aligned}
\min\bigg\{rv(x,i)-b(i)xv^{\prime}(x,i)+\frac{\theta}{2}\sigma^{2}(i)x^{2}[v^{\prime}(x,i)]^{2}
-\frac{1}{2}\sigma^{2}(i)x^{2}v^{\prime\prime}(x,i)-Qv(x,\cdot)(i)&,\\
v(x,i)-(x-K)\bigg\}=0&.
\end{aligned}
%\right.
\end{equation}
In the following, Subsection \ref{TSC} is concerned with a two-state Markov chain
and Subsection \ref{FSC} focuses on a four-state Markov chain with a two-time-scale structure.

\subsection{Two-state case}\label{TSC}

In this subsection, we consider a two-state Markov chain $\alpha_{t}$ with state space
$\mathcal{M}=\{1,2\}$, whose generator is given by
\begin{equation*}
%\left\{
\begin{aligned}
\left[
  \begin{array}{cc}
    -\lambda_{1} & \lambda_{1} \\
    \lambda_{2} & -\lambda_{2} \\
  \end{array}
\right].
\end{aligned}
%\right.
\end{equation*}
We take $r=5$, $b(1)=2.125$, $b(2)=0.875$, $\sigma(1)=1$, $\sigma(2)=1$, $K=1$, $\theta=0.01$,
$\lambda_{1}=1$, $\lambda_{2}=1$ as a set of \emph{basic parameters} and numerically compute
the solution to (\ref{numerical HJB}). The step size for $x$ used in the finite difference
scheme is chosen to be 0.01. Under the basic parameters, the optimal selling rule is computed
to be $(x_{1},x_{2})=(1.99,1.53)$ and the value function is plotted in Figure \ref{FigureTwo001}.
The function $x-K$, as the \emph{obstacle part} of (\ref{numerical HJB}), is also plotted as
dotted line; in fact, the threshold levels are exactly the corresponding \emph{intersection points}
of the continuation part and the obstacle part $x-K$.
We see that the price to sell the stock in state 1 is bigger than that in state 2 (i.e., $x_{1}>x_{2}$).
It is reasonable because the appreciation rates $b(1)>b(2)$ and hence the investor
in state 1 would like to sell the stock later than that in state 2 to wait a further going up
of the stock price. Moreover, as expected, $V(x,1)$ is always higher than $V(x,2)$.

In order to demonstrate the influence of ambiguity aversion of the investor, we also compute
the cases when $\theta=0.1$ and $\theta=1$. Together with the basic case when $\theta=0.01$,
the threshold levels $(x_{1},x_{2})$ are listed in Table \ref{Dependence on theta} and
the value functions are plotted in Figures \ref{FigureTwo001}, \ref{FigureTwo01}, \ref{FigureTwo1},
respectively. From Table \ref{Dependence on theta} and Figures \ref{FigureTwo001}, \ref{FigureTwo01},
\ref{FigureTwo1}, it can be found that both $x_{1}$ and $x_{2}$ decrease (i.e., to sell the stock
earlier) and the value function declines (i.e., the reward gets smaller) as $\theta$ increases.
In other words, the investor becomes more \emph{conservative} as the ambiguity degree grows up.

\begin{table}[htbp]
\centering
\caption{Threshold levels}
\label{Dependence on theta}
\renewcommand\arraystretch{1.25}
\begin{tabular}{|p{3cm}<{\centering}|p{3cm}<{\centering}|p{3cm}<{\centering}|p{3cm}<{\centering}|}
  \hline
  $\theta$ & 0.01 & 0.1 & 1  \\
  \hline
  $(x_{1},x_{2})$ & (1.99,1.53) & (1.95,1.52) & (1.69,1.43) \\
  \hline
\end{tabular}
\end{table}

\begin{figure}[htbp]
\centering
\includegraphics[width=5in]{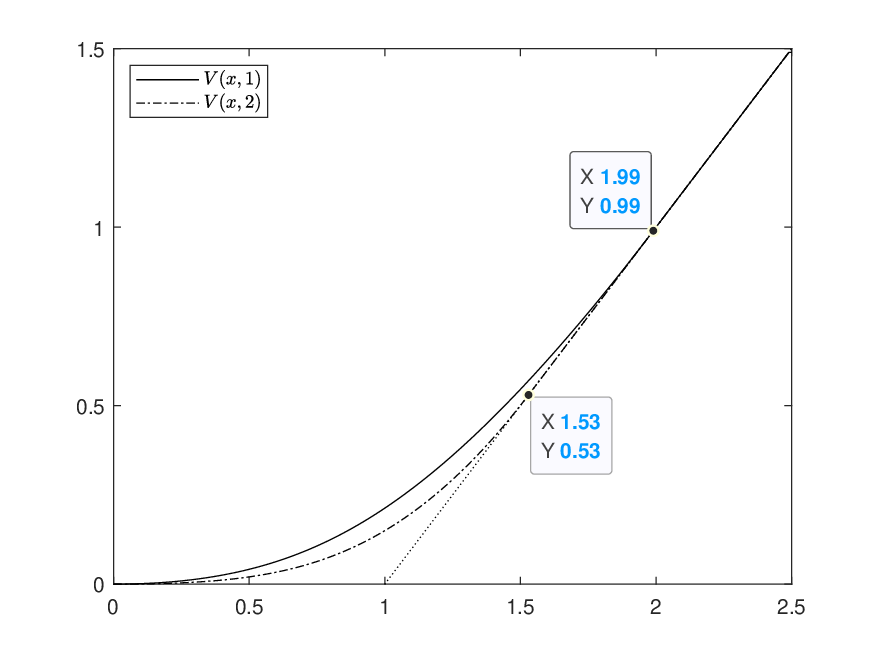}
\caption{Value function when $\theta=0.01$}
\label{FigureTwo001}
\end{figure}

\begin{figure}[htbp]
\centering
\includegraphics[width=5in]{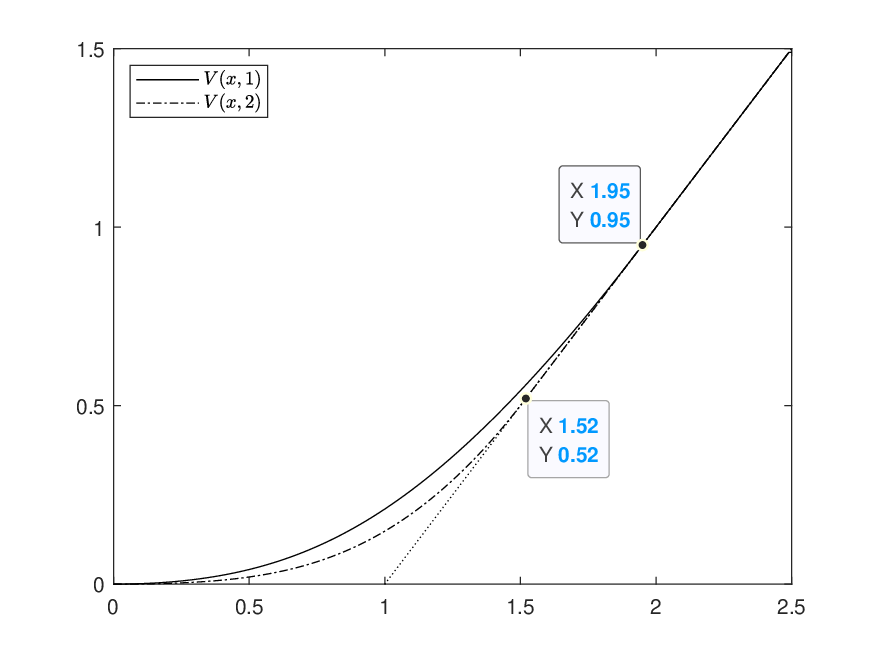}
\caption{Value function when $\theta=0.1$}
\label{FigureTwo01}
\end{figure}

\begin{figure}[htbp]
\centering
\includegraphics[width=5in]{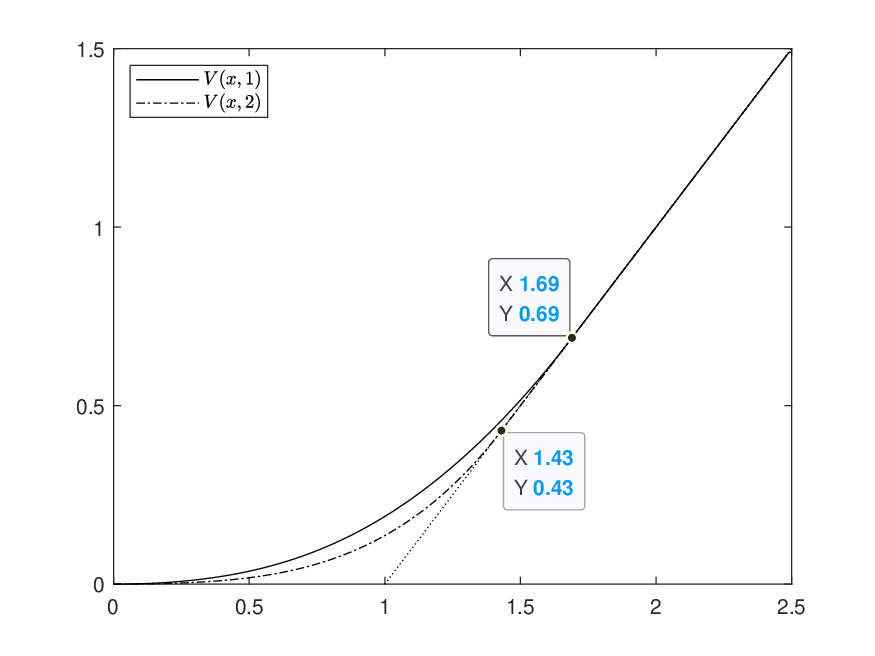}
\caption{Value function when $\theta=1$}
\label{FigureTwo1}
\end{figure}

\subsection{Four-state case}\label{FSC}

In this subsection, the movements of a stock market can be viewed as a composition of
two kinds of \emph{macro trends} that last for a long time and switch to each other
\emph{occasionally} and two kinds of \emph{micro trends} that run for only a short time
and switch to each other \emph{frequently}. To take care of such a situation, one may
consider a four-state Markov chain 
$$
\alpha^{\varepsilon}_{t}=(\alpha_{t}^{1},\alpha_{t}^{2}),
$$
where $\alpha_{t}^{1}\in\{1,2\}$ represents the macro market trends and $\alpha_{t}^{2}\in\{1,2\}$
stands for the micro market trends. We denote the state space of $\alpha^{\varepsilon}_{t}$ as
$$
\{(1,1),(1,2),(2,1),(2,2)\},
$$ 
and assume $\alpha^{\varepsilon}_{t}$ has a \emph{two-time-scale} generator $Q^{\varepsilon}$ given by
\begin{equation*}
%\left\{
\begin{aligned}
\frac{1}{\varepsilon}\left[
                                       \begin{array}{cccc}
                                         -\lambda_{1} & \lambda_{1} & 0 & 0 \\
                                         \lambda_{2} & -\lambda_{2} & 0 & 0 \\
                                         0 & 0 & -\lambda_{1} & \lambda_{1} \\
                                         0 & 0 & \lambda_{2} & -\lambda_{2} \\
                                       \end{array}
                                     \right]
                                     +\left[
                                        \begin{array}{cccc}
                                          -\mu_{1} & 0 & \mu_{1} & 0 \\
                                          0 & -\mu_{1} & 0 & \mu_{1} \\
                                          \mu_{2} & 0 & -\mu_{2} & 0 \\
                                          0 & \mu_{2} & 0 & -\mu_{2} \\
                                        \end{array}
                                      \right].
\end{aligned}
%\right.
\end{equation*}
For convenience, we also denote the state space of $\alpha^{\varepsilon}_{t}$ as 
$$
\{1,2,3,4\}
$$
with 1=(1,1), 2=(1,2), 3=(2,1), 4=(2,2). We take $b(1)=2.5$, $b(2)=1.75$, $b(3)=1.25$, $b(4)=0.5$,
$\sigma(1)=1$, $\sigma(2)=1$, $\sigma(3)=1$, $\sigma(4)=1$, $\lambda_{1}=2$, $\lambda_{2}=2$,
$\mu_{1}=1$, $\mu_{2}=1$. In addition, $r$, $K$, $\theta(=0.01)$, and the step size for finite
difference scheme are the same with those in Subsection \ref{TSC}. The value functions when
$\varepsilon=1$, $\varepsilon=0.1$, $\varepsilon=0.01$ are plotted in Figures \ref{FigureFour1},
\ref{FigureFour01}, \ref{FigureFour001}, respectively. Note that $V^{\varepsilon}(x,1)$ and
$V^{\varepsilon}(x,2)$ (similarly, $V^{\varepsilon}(x,3)$ and $V^{\varepsilon}(x,4)$) are moving
\emph{closer} as the time-scale parameter $\varepsilon$ approaches 0; they are almost overlapped
in Figure \ref{FigureFour001}.

Finally, we consider the limit problem when $\varepsilon\rightarrow0$. The corresponding
stationary distributions are given by $\nu^{1}=\nu^{2}=(\lambda_{2}/(\lambda_{1}+\lambda_{2}),
\lambda_{1}/(\lambda_{1}+\lambda_{2}))=(1/2,1/2)$, and from (\ref{limit generator}),
the generator for the limit Markov chain $\overline{\alpha}_{t}$ reads
\begin{equation*}
%\left\{
\begin{aligned}
\overline{Q}=\left[
          \begin{array}{cc}
            -\mu_{1} & \mu_{1} \\
            \mu_{2} & -\mu_{2} \\
          \end{array}
        \right]=\left[
                  \begin{array}{cc}
                    -1 & 1 \\
                    1 & -1 \\
                  \end{array}
                \right].
\end{aligned}
%\right.
\end{equation*}
With the parameters specified above and according to (\ref{averaged coefficients}),
we obtain the averaged coefficients $\overline{b}(1)=2.125$, $\overline{b}(2)=0.875$,
$\overline{\sigma}(1)=1$, $\overline{\sigma}(2)=1$ for the limit problem. Note that
these coefficients are the same with those in the first subsection. So the first
subsection is exactly the limit problem.
The optimal selling rule is given by $(x_{1},x_{2})=(1.99,1.53)$ and the value function
can be found in Figure \ref{FigureTwo001}.

To show the performance of the two-time-scale convergence, we define the \emph{error norms}
$N_{1}^{\varepsilon}$ and $N_{2}^{\varepsilon}$ as
$N_{1}^{\varepsilon}=\sum_{n=1}^{N}(|V^{\varepsilon}_{1}(n)-\overline{V}_{1}(n)|
+|V^{\varepsilon}_{2}(n)-\overline{V}_{1}(n)|)/N$ and
$N_{2}^{\varepsilon}=\sum_{n=1}^{N}(|V^{\varepsilon}_{3}(n)-\overline{V}_{2}(n)|
+|V^{\varepsilon}_{4}(n)-\overline{V}_{2}(n)|)/N$,
where $N$ is the number of meshpoints with respect to $x$, and $V^{\varepsilon}(n)$
(respectively, $\overline{V}(n)$) stands for the value of $V^{\varepsilon}$ (respectively,
$\overline{V}$) at meshpoint $x(n)$. The error norms are calculated and listed in Table \ref{Errors}.
Table \ref{Errors} suggests that $V^{\varepsilon}$ approximates $\overline{V}$ quite well
for small $\varepsilon$. In this sense, the two-time-scale approximation would be helpful
for the \emph{long-term} investors to skip the switching of micro market when it varies too fast,
and only pay attention to the switching of macro market.

\begin{figure}[htbp]
\centering
\includegraphics[width=5in]{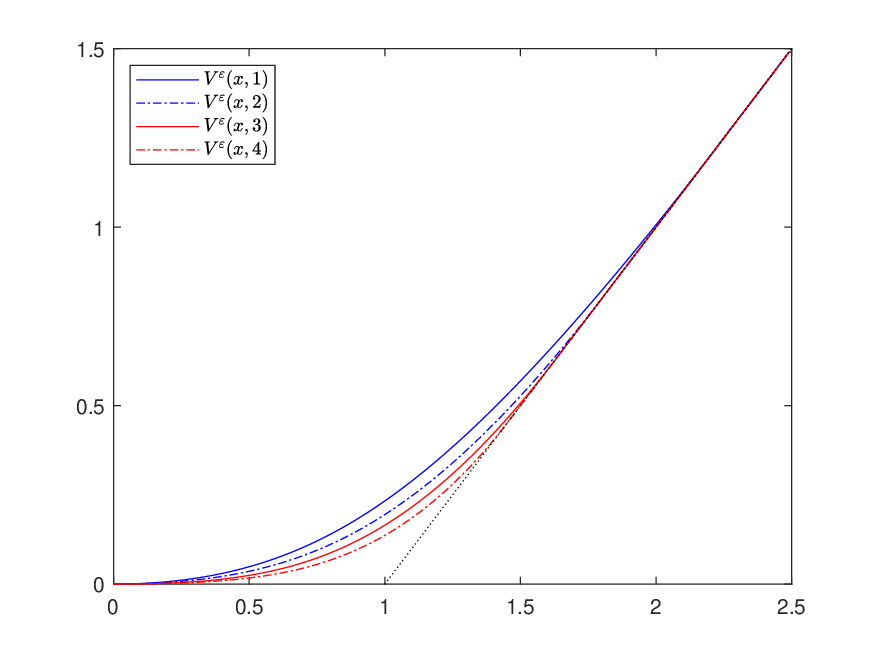}
\caption{Value function when $\varepsilon=1$}
\label{FigureFour1}
\end{figure}

\begin{figure}[htbp]
\centering
\includegraphics[width=5in]{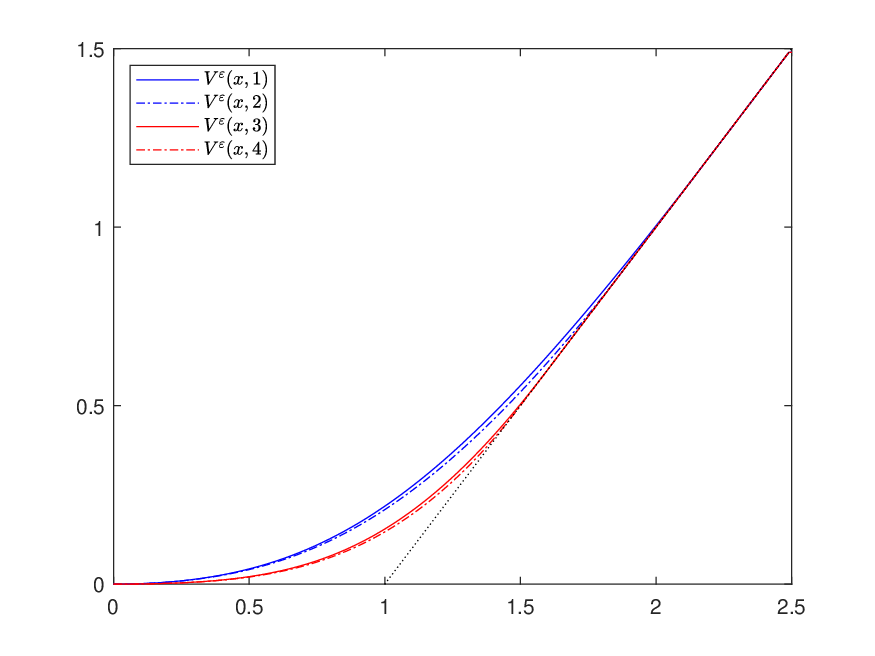}
\caption{Value function when $\varepsilon=0.1$}
\label{FigureFour01}
\end{figure}

\begin{figure}[htbp]
\centering
\includegraphics[width=5in]{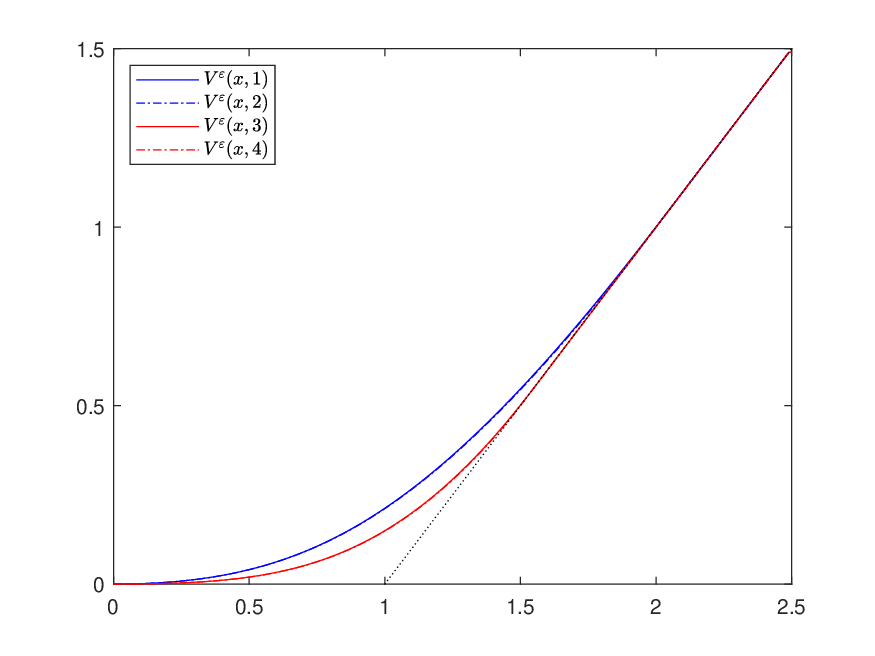}
\caption{Value function when $\varepsilon=0.01$}
\label{FigureFour001}
\end{figure}

\begin{table}[htbp]
\centering
\caption{Error norms}
\label{Errors}
\renewcommand\arraystretch{1.25}
\begin{tabular}{|p{3cm}<{\centering}|p{3cm}<{\centering}|p{3cm}<{\centering}|p{3cm}<{\centering}|}
\hline
$\varepsilon$ & 1 & 0.1 & 0.01 \\
\hline
$N_{1}^{\varepsilon}$ & 0.0200 & 0.0069 & 0.0015 \\
\hline
$N_{2}^{\varepsilon}$ & 0.0091 & 0.0029 & 0.0007 \\
\hline
\end{tabular}
\end{table}


\begin{thebibliography}{00}

\bibitem{BL1982}
A. Bensoussan, J. L. Lions,
Applications of Variational Inequalities in Stochastic Control,
North-Holland Publishing Co., Amsterdam-New York, 1982.

\bibitem{CFT2013}
O. L. V. Costa, M. D. Fragoso, M. G. Todorov,
Continuous-Time Markov Jump Linear Systems,
Springer, Heidelberg, 2013.

\bibitem{CIL1992}
M. Crandall, H. Ishii, P. L. Lions,
User's guide to viscosity solutions of second order partial differential equations,
Bull. Amer. Math. Soc.,
27 (1992), 1--67.

\bibitem{EM2023MOR}
P. Ernst, H. Mei,
Exact optimal stopping for multi dimensional linear switching diffusions,
Math. Oper. Res., 48 (2023), 1589--1606.

\bibitem{EMP2024SICON}
P. Ernst, H. Mei, G. Peskir,
Quickest real-time detection of multiple Brownian drifts,
SIAM J. Control Optim., 62 (2024), 1832--1856.

\bibitem{EP2022AAP}
P. Ernst, G. Peskir,
Quickest real-time detection of a Brownian coordinate drift,
Ann. Appl. Probab., 32 (2022), 2652--2670.

\bibitem{GZ2005}
X. Guo, Q. Zhang,
Optimal selling rules in a regime switching model,
IEEE Trans. Automat. Control,
50 (2005), 1450--1455.

\bibitem{HS2001}
L. P. Hansen, T. J. Sargent,
Robust control and model uncertainty,
Amer. Econ. Rev.,
91 (2001), 60--66.

\bibitem{Hartman2002}
P. Hartman,
Ordinary Differential Equations, SIAM, Philadelphia, PA, 2002.

\bibitem{Lions1975}
J. L. Lions,
Variational problems and free boundary problems,
In: International Symposium on Mathematical Problems in Theoretical Physics,
356--369, Springer, Berlin-New York, 1975.

\bibitem{Lions1976}
J. L. Lions,
Some topics on variational inequalities and applications,
In: New Developments in Differential Equations,
1--38, North-Holland, Amsterdam-New York-Oxford, 1976.

\bibitem{Menaldi1980stopping}
J. L. Menaldi,
On the optimal stopping time problem for degenerate diffusions,
SIAM J. Control Optim., 18 (1980), 697--721.

\bibitem{Menaldi1982}
J. L. Menaldi,
On a degenerate variational inequality with Neumann boundary conditions,
J. Optim. Theory Appl., 36 (1982), 535--563.

\bibitem{Oksendal2003}
B. {\O}ksendal,
Stochastic Differential Equations,
Springer-Verlag, Berlin, 2003.

\bibitem{PS2006}
G. Peskir, A. Shiryaev,
Optimal Stopping and Free-Boundary Problems,
Birkh\"{a}user Verlag, Basel, 2006.

\bibitem{Pham2009}
H. Pham,
Continuous-Time Stochastic Control and Optimization with Financial Applications,
Springer-Verlag, Berlin, 2009.

\bibitem{Pun2018}
C. S. Pun,
Robust time-inconsistent stochastic control problems,
Automatica,
94 (2018), 249--257.

\bibitem{Shiryaev2010}
A. N. Shiryaev,
Quickest detection problems: fifty years later,
Sequential Anal., 29 (2010), 345--385.

\bibitem{TF2013}
M. G. Todorov, M. D. Fragoso,
A new perspective on the robustness of Markov jump linear systems,
Automatica, 49 (2013), 735--747.

\bibitem{YaoZhangZhou2006}
D. D. Yao, Q. Zhang, X. Y. Zhou,
A regime-switching model for European options,
In: Stochastic Processes, Optimization, and Control Theory,
281--300, Springer, New York, 2006.

\bibitem{YinZhang2013}
G. Yin, Q. Zhang,
Continuous-Time Markov Chains and Applications,
Springer, New York, 2013.

\end{thebibliography}
\end{document}